\title{List strong edge-coloring of graphs with maximum degree 4}
\author{Baochen Zhang$^{1}$, Yulin Chang$^{1}$, Jie Hu$^{1}$, Meijie Ma$^{2}$, Donglei Yang$^{1}$\thanks{Corresponding author. Email:dlyang120@163.com, baochen\_zhang@163.com.},\\
$^{1}$\small School of Mathematics, Shandong University, Jinan, Shandong 250100, China\\
$^{2}$\small School of Management Science and Engineering, \\
\small Shandong Technology and Business University, Yantai, 264005, China}
\date{}
\newtheorem{theorem}{Theorem}
\newtheorem{lemma}{Lemma}[section]
\newtheorem{claim}{Claim}[section]
\begin{document}
\baselineskip 0.595cm
\maketitle

\begin{abstract}
\normalsize
\vspace{3mm}

A strong edge-coloring of a graph $G$ is an edge-coloring such that any two edges on a path of length three receive distinct colors. We denote the strong chromatic index by $\chi_{s}'(G)$ which is the minimum number of colors that allow a strong edge-coloring of $G$. Erd\H{o}s and Ne\v{s}et\v{r}il conjectured in 1985 that the upper bound of $\chi_{s}'(G)$ is $\frac{5}{4}\Delta^{2}$ when $\Delta$ is even and $\frac{1}{4}(5\Delta^{2}-2\Delta +1)$ when $\Delta$ is odd, where $\Delta$ is the maximum degree of $G$. The conjecture is proved right when $\Delta\leq3$. The best known upper bound for $\Delta=4$ is 22 due to Cranston previously. In this paper we extend the result of Cranston to list strong edge-coloring, that is to say, we prove that when $\Delta=4$ the upper bound of list strong chromatic index is 22.
\\
\\
\noindent {\textbf{Key words}: List strong edge-coloring; Combinatorial Nullstellensatz; Hall's Theorem}

\end{abstract}

\section{Introduction}

A \emph{strong edge-coloring} is a proper edge-coloring with the further condition that no two edges with the same color on a path of length three. To be more clearly, a \emph{strong $k$-edge-coloring} of a graph $G$ is a coloring $\phi: E(G)\longrightarrow [k]$ such that if any two edges $e_1$ and $e_2$ are either adjacent to each other or adjacent to a common edge, then $\phi(e_1)\neq \phi(e_2)$. The \emph{strong chromatic index} of $G$, denoted by $\chi'_{s}(G)$, is the minimum positive integer $k$ for which $G$ has a strong $k$-edge-coloring.

A \emph{list strong edge-coloring} of $G$ is a strong edge-coloring such that each edge $e$ receives a color in a prescribed color list $L(e)$. Let \emph{list assignment} $L=\{L(e): e\in E(G)\}$.  Then graph $G$ is \emph{strongly $L$-edge-colorable} if there exists a strong edge-coloring $c$ of $G$ such that $c(e)\in L(e)$ for every $e\in E(G)$. For a positive integer $k$, a graph $G$ is \emph{strongly $k$-edge-choosable} if $G$ is strongly $L$-edge-colorable for every $L$ with $|L(e)|\geq k$ for all $e\in E(G)$. The \emph{strong choice number}, denoted by $\chi'_{ls}(G)$, is the minimum positive integer $k$ for which $G$ is strongly $k$-edge-choosable.

We consider $\chi_{s}'(G)$ of graphs with known maximum degree and denote the maximum degree of a graph by $\Delta$. As for the strong chromatic number $\chi_{s}'(G)$, Erd\H{o}s and Ne\v{s}et\v{r}il \cite{erdos1,erdos2} conjectured that $\chi_{s}'(G)$ of a graph $G$ is at most $\frac{5}{4}\Delta^{2}$ when $\Delta$ is even and $\frac{1}{4}(5\Delta^{2}-2\Delta +1)$ when $\Delta$ is odd in 1985; they also give a construction to show that if the conjecture is true, then the bound is tight. For graphs with $\Delta=3$, the conjecture was proved right by Andersen \cite{andersen} and by Hor\'{a}k \cite{horak} independently. For $\Delta=4$, while the conjecture says that $\chi_{s}'(G)\leq20$, the best known upper bound is 22 due to Cranston \cite{cranston}.

When $\Delta$ is sufficiently large, Bonamy, Perrett, and Postle~\cite{BPP} proved that $\chi'_{s}(G)\leq 1.835\Delta^2$. As for $k$-degenerate graphs, Yu~\cite{Yu} has proved that $\chi'_{s}(G)\leq (4k-2)\Delta-2k^2+k+1$. More results of this kind can be found in~\cite{Wang,Yang}.

In this paper, we mainly prove the following theorem which extends Cranston's result to the list version.

\begin{theorem}\label{th1}
Let $G$ be a graph with maximum degree 4, then $G$ is strongly 22-edge-choosable.

\end{theorem}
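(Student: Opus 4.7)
The plan is to take an edge-minimum counterexample $G$ with list assignment $L$, $|L(e)|\ge 22$ for every edge $e$, and to derive a contradiction by combining reducibility lemmas with a discharging argument, following the blueprint of Cranston's proof of the ordinary case but upgrading every extension step to the list setting by means of the Combinatorial Nullstellensatz and Hall's theorem, as advertised in the keywords.

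For an edge $uv$ with $d(u)=d(v)=4$, the number of other edges at strong-distance at most two from $uv$ is $2(\Delta-1)+2(\Delta-1)^{2}=24$, which exceeds $22$ by exactly two. Hence a naive greedy extension leaves a deficit of two, and the entire game is to design configurations that save two colors locally. The first stage of the proof is therefore a catalogue of reducible configurations: edges incident to a vertex of degree at most three, short paths whose inner vertices are low-degree, and adjacency patterns that force repeats in the multiset of forbidden colors. For each configuration $F\subseteq E(G)$, we remove $F$, use minimality to obtain a strong $L$-edge-coloring of $G-F$, and re-extend to $F$ in one of three regimes: a single uncolored edge with at most $21$ distinct forbidden colors can be colored greedily; a set of two or more edges whose constraints decouple enough can be colored via Hall's theorem on the bipartite ``edges vs.\ available colors'' auxiliary graph once the deficiency of every subset is checked; and, when the constraints are too tight for Hall's theorem, the Combinatorial Nullstellensatz is applied to the graph polynomial $\prod_{e\neq f}(x_{e}-x_{f})$ over the conflict structure of $F$, proving nonvanishing of a specific monomial whose exponents fit inside the lists.

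Once the catalogue of reducible configurations is in place, the proof concludes by a discharging argument: assign each vertex $v$ an initial charge depending on $d(v)$ (for instance $d(v)-4$, with the handshake lemma controlling the total), design local discharging rules that transfer charge from degree-$4$ vertices to low-degree neighbours or across specific nearby structures, and verify that if none of the reducible configurations appears then every vertex ends with strictly positive charge, contradicting the zero total. The main obstacle will be the reducibility step rather than the discharging: in the non-list setting Cranston could strategically reuse chosen colors, but for arbitrary lists this freedom is lost, and each extension must succeed for \emph{every} admissible list. Verifying the nonvanishing of the relevant Nullstellensatz coefficient for each configuration, or Hall's condition uniformly in the lists, is where essentially all the technical work will sit, and is precisely the reason the proof must appeal to both tools rather than to either one alone.
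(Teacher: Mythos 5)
Your framework---edge-minimal counterexample, reducible configurations handled by greedy/Hall/Nullstellensatz extensions---matches the first half of the paper, which indeed uses Hall's theorem (via a maximum-discrepancy lemma) for the 4-cycle reduction and the Combinatorial Nullstellensatz for the 5-cycle reduction. But your endgame is a dead end, and the actual key idea of the proof is missing. Once you have shown that vertices of degree at most $3$ are reducible, the minimal counterexample is $4$-regular, so with initial charge $d(v)-4$ every vertex carries charge exactly $0$ and the total is exactly $0$; no system of discharging rules can then make every vertex strictly positive, and there are no low-degree vertices to send charge to in the first place. Discharging is the wrong tool here: the paper never uses it. Its unavoidable configuration is simply \emph{any} vertex of the (simple, $4$-regular, girth $\ge 6$) minimal counterexample, and the whole difficulty is to show that such a vertex is reducible in the list setting.

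The mechanism you are missing is the following. Fix a vertex $v$ with incident edges $e_1,\dots,e_4$, and for each $i$ let $A_i$ be the set of three edges adjacent to $e_i$ but not incident with $v$. A compatible-ordering argument shows that $G-v$ can be colored from lists of size $21$, and (the paper's Claim 2) that it can still be colored from lists of size $22$ even after one edge from each $A_i$ is precolored arbitrarily. The deficit of two that you correctly identify is then overcome by choosing those four precolored edges and their colors cleverly, via a case analysis on how the union lists $L(A_1),\dots,L(A_4)$ intersect one another and the lists $L(e_i)$: either one finds colors in some $L(A_i)$ that do not appear in certain $L(e_j)$ (using counts such as $|\bigcup_i L(A_i)| \ge 4\cdot 22 > 3\cdot 22 \ge |\bigcup_{i\le 3} L(e_i)|$), or one finds a common color that can be placed on two or more of the $a_i$ simultaneously (legitimate because $g(G)\ge 6$ keeps these edges at strong distance more than two). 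Either way each $e_i$ ends up with at least one forbidden-color repeat or one unusable color in its neighborhood, restoring $|L'(e_i)|\ge 1$ after a suitable ordering of $e_4,e_3,e_2,e_1$. Without this precoloring/list-intersection argument (or some substitute for it), your proposal does not close the gap of two colors at a generic vertex, so it does not constitute a proof.
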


We will give the proof of Theorem~\ref{th1} in section 4. Section 2 introduces definitions and tools that are used in this paper. In section 3, we explore some basic properties of the minimal counterexample to Theorem~\ref{th1}.

%consider graphs that are not 4-regular and graphs with girth at most 3. In each case, we exploit local structure of the graph to give an edge order with $|N(e)|\leq20$ for every edge $e$ in $G$. In section 3, we consider 4-regular graphs with girth 4. We will find pairs of edges that can receive the same color and using Hall's Theorem to complete our proofs. In this case, even though $|N(e)|>21$, because not every edge in $N(e)$ receive a distinct color, we ensure that the color list size of every edge is at most 22. In section 4, we prove the case of 4-regular graphs with girth 5 by using the famous Combinatorial Nullstellensatz theorem of Alon \cite{alon}. In section 5, we give proofs of the last case 4-regular graphs with girth at least 6 by considering the local structure and analyzing the color list of every edge of the local structure.

\section{Preliminaries and Notation}

Throughout this paper, when we use term coloring, we mean list strong edge-coloring. Each connected component of a graph $G$ can be colored independently, so we assume that $G$ is connected, and we allow our graphs to include loops and multiple edges.

We use $\delta$ to denote the minimum degree of a graph and $d(v)$ to denote the degree of a vertex $v$. The length of the shortest cycle in a graph $G$ is denoted by girth $g(G)$. We define \emph{$i$-cycle} to be a cycle of length $i$.

We denote the distance between two edges by the minimum distances between their endpoints. The \emph{neighborhood} $N(e)$ of an edge $e$ is the set of edges that have distances at most one away from $e$. Intuitively, this is the edge set whose colors could potentially restrict the colors of $e$.

A \emph{partial coloring} of $G$ is a coloring of a proper subgraph of $G$. Given a partial coloring $c$ and an edge $e$, a color in $L(e)$ is \emph{available} for $e$ if the color is not used in $N(e)$. And we denote the set of available colors for $e$ by $L'(e)$. Let $N'(e)$ be the set of colored edges in $Neigh(e)$.

Fig.1 shows that $|N(e)|\leq24$ for the edge $e$. In all figures of this paper, black vertices have no other neighbors than those represented, but white vertices might have other neighbors.
%Since we have $|N(e)|\leq24$ for every edge, the greedy algorithm produces a list strong edge-coloring with $L(e)=25$ for any edge $e$ of the graph.

\begin{figure}[h]
  \centering
  \includegraphics[scale=0.45]{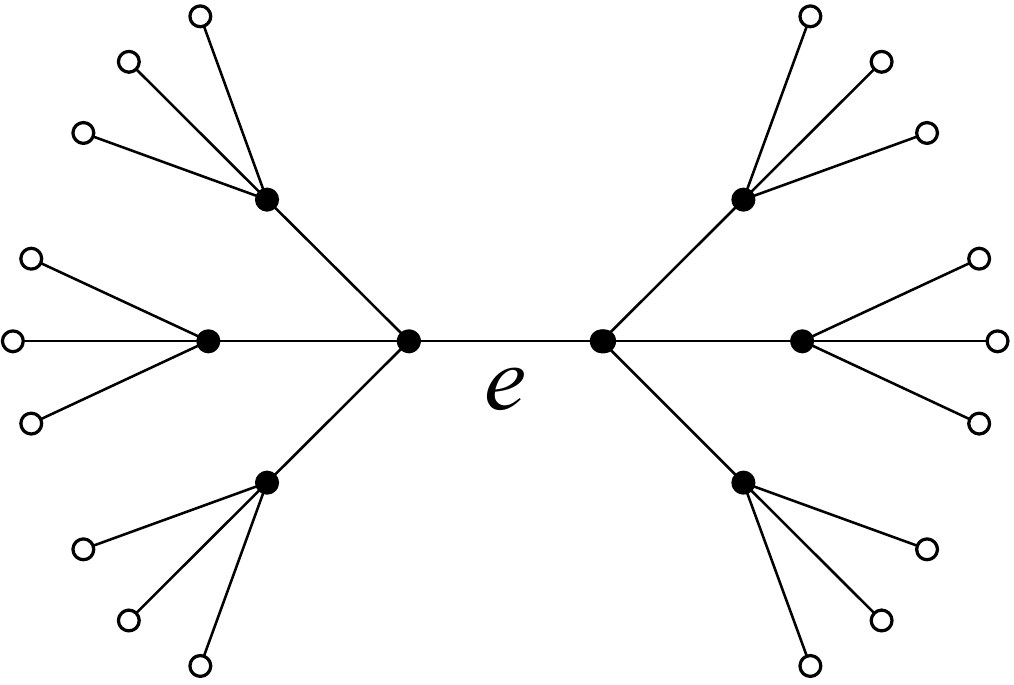}\\
  {Fig.1. The largest possible neighborhood of an edge.}
\end{figure}%11.1*8.3

One of the main tools we use is the Combinatorial Nullstellensatz.

\begin{lemma}\label{lemma1} \emph{(Alon \cite{alon}, Combinatorial Nullstellensatz)} Let $\mathbb{F}$ be an arbitrary field, and let
$P = P(x_1, \cdots, x_n)$ be a polynomial in $\mathbb{F}[x_1, \cdots, x_n]$. Suppose the degree $deg(P)$ of $P$ equals $\sum\limits^n_{i=1}k_i$, where each $k_i$ is a non-negative integer, and suppose the coefficient of $x_1^{k_1}\cdots x_n^{k_n}$
in $P$
is non-zero. Then if $S_1,\cdots, S_n$ are subsets of $\mathbb{F}$ with $|S_i| > k_i, i=1,\cdots,n$, there exist $s_1\in S_1,\cdots, s_n\in S_n$
so that $P(s_1, \cdots, s_n)\neq0$.
\end{lemma}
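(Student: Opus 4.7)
The plan is to prove the Combinatorial Nullstellensatz by a polynomial division argument that reduces the statement to the elementary fact that a polynomial of bounded degree in each variable, vanishing on a sufficiently large product set, must be identically zero. First, I would note it suffices to treat the case $|S_i| = k_i + 1$ exactly, since shrinking the sets only strengthens the conclusion. Next, set $g_i(x_i) := \prod_{s \in S_i}(x_i - s)$, a monic polynomial of degree $k_i+1$ in $x_i$, and establish the \emph{reduction lemma}: there exist $h_1,\ldots,h_n,\widetilde{P} \in \mathbb{F}[x_1,\ldots,x_n]$ with
\[
P = \sum_{i=1}^n h_i\, g_i + \widetilde{P}, \qquad \deg_{x_i}\widetilde{P} \leq k_i \text{ for each } i, \qquad \deg(h_i g_i) \leq \deg P.
\]
This is obtained by repeatedly rewriting any monomial of $x_i$-degree $m > k_i$ via $x_i^m = x_i^{m-k_i-1} g_i(x_i) + x_i^{m-k_i-1}\bigl(x_i^{k_i+1} - g_i(x_i)\bigr)$; the first summand contributes to $h_i g_i$ while the second has strictly smaller $x_i$-degree (and no larger total degree), so the procedure terminates.

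The crucial bookkeeping step is to verify that the coefficient of $x_1^{k_1}\cdots x_n^{k_n}$ in $\widetilde{P}$ equals its coefficient in $P$. No term $h_i g_i$ can contribute to this monomial: any monomial of $h_i$ has total degree at most $\deg P - (k_i+1)$, because multiplying by the leading term $x_i^{k_i+1}$ of $g_i$ already accounts for $k_i + 1$ of the at most $\sum_j k_j$ degrees available in $h_i g_i$. Consequently, if $h_i$ contained a monomial $x_1^{k_1}\cdots x_i^{k_i - c}\cdots x_n^{k_n}$ pairing with $x_i^c$ in $g_i$ to yield $x_1^{k_1}\cdots x_n^{k_n}$, then its total degree $\sum_j k_j - c$ would force $c \geq k_i + 1$; but then the $x_i$-exponent $k_i - c$ is negative, a contradiction. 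Hence the targeted top coefficient is preserved under the reduction.

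To finish, suppose for contradiction that $P(s_1,\ldots,s_n) = 0$ for every tuple in $S_1 \times \cdots \times S_n$. Since $g_i(s_i) = 0$ for all $s_i \in S_i$, the decomposition yields $\widetilde{P}(s_1,\ldots,s_n) = 0$ throughout the grid. I would then invoke the classical vanishing lemma, proved by induction on $n$: any $Q \in \mathbb{F}[x_1,\ldots,x_n]$ with $\deg_{x_i} Q \leq k_i$ vanishing on a product $T_1 \times \cdots \times T_n$ with $|T_i| = k_i+1$ must be identically zero. (Freezing the first $n-1$ coordinates leaves a univariate polynomial in $x_n$ of degree $\leq k_n$ with $k_n + 1$ distinct roots, hence zero; the coefficients of $Q$ in $x_n$ are then polynomials in the remaining variables vanishing on $T_1 \times \cdots \times T_{n-1}$, so by the inductive hypothesis they are all zero.) Applying this to $\widetilde{P}$ forces $\widetilde{P} \equiv 0$, contradicting the non-vanishing of its $x_1^{k_1}\cdots x_n^{k_n}$ coefficient. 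The main obstacle is the reduction lemma and the accompanying degree-tracking needed to certify that the top coefficient survives; the classical vanishing lemma and the induction are routine.
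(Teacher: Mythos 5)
Your proof is correct. The paper does not prove this lemma at all---it is quoted verbatim from Alon's paper as a known tool---so there is no internal proof to compare against; what you have written is essentially Alon's own argument: division of $P$ by the polynomials $g_i(x_i)=\prod_{s\in S_i}(x_i-s)$, the degree-tracking showing the coefficient of $x_1^{k_1}\cdots x_n^{k_n}$ survives the reduction, and the inductive vanishing lemma for polynomials of bounded degree in each variable on a product set. All three steps are carried out correctly, including the one place where care is genuinely needed (that no $h_ig_i$ can contribute to the target monomial because $\deg h_i\leq \deg P-(k_i+1)$).
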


Another tool we use is the Hall's Theorem.

\begin{lemma}\label{lemma2} \emph{(Hall \cite{hall})} Let $A_{1},...,A_{n}$ be $n$ subsets of a set U. A distinct representatives of $\{A_{1},...,A_{n}\}$ exists if and only if for all $k$, $1\leq k\leq n$ and every choice of subcollection of size $k$, $\{A_{i_{1}},...,A_{i_{k}}\}$, we have $|A_{i_{1}}\bigcup ...\bigcup A_{i_{k}}|\geq k$.
\end{lemma}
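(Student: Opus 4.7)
The plan is to prove Hall's theorem by induction on $n$, the number of sets. The necessity direction is immediate: if $\{A_1,\ldots,A_n\}$ admits a system of distinct representatives (SDR), then for any subcollection $\{A_{i_1},\ldots,A_{i_k}\}$ the corresponding $k$ representatives are distinct elements of $A_{i_1}\cup\cdots\cup A_{i_k}$, so this union has size at least $k$. All the work lies in the sufficiency direction, which I would establish by induction on $n$.

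For the base case $n=1$, the hypothesis with $k=1$ gives $|A_1|\geq 1$, so any element of $A_1$ serves as a representative. For the inductive step, assume the theorem holds for all collections of fewer than $n$ sets and that $\{A_1,\ldots,A_n\}$ satisfies Hall's condition. I would split into two cases depending on whether Hall's condition holds with slack or is tight somewhere strictly below $n$.

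Case 1 (slack case): for every $1\leq k<n$ and every subcollection of size $k$, the union has size at least $k+1$. Pick any $x\in A_n$ (which is nonempty since $|A_n|\geq 1$), set $A_n$ aside as represented by $x$, and define $A_i' = A_i\setminus\{x\}$ for $i<n$. Any subcollection of size $k$ from $\{A_1',\ldots,A_{n-1}'\}$ has union of size at least $(k+1)-1=k$, so Hall's condition holds for the shorter collection and the induction hypothesis yields an SDR, which combined with $x$ produces one for the full collection. Case 2 (tight case): there exist an index $k$ with $1\leq k<n$ and, after relabeling, sets $A_1,\ldots,A_k$ with $|A_1\cup\cdots\cup A_k|=k$. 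The induction hypothesis applied to $\{A_1,\ldots,A_k\}$ produces an SDR that uses exactly the elements of $U^* := A_1\cup\cdots\cup A_k$. Define $A_i'' = A_i\setminus U^*$ for $k<i\leq n$, and verify Hall's condition for $\{A_{k+1}'',\ldots,A_n''\}$: for any sub-indices $i_1,\ldots,i_j\in\{k+1,\ldots,n\}$, apply the original Hall condition to $\{A_1,\ldots,A_k,A_{i_1},\ldots,A_{i_j}\}$ to get $|A_1\cup\cdots\cup A_k\cup A_{i_1}\cup\cdots\cup A_{i_j}|\geq k+j$; subtracting off the $k$ elements of $U^*$ leaves at least $j$ elements in $A_{i_1}''\cup\cdots\cup A_{i_j}''$. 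The induction hypothesis then yields an SDR for $\{A_{k+1}'',\ldots,A_n''\}$, and concatenating with the SDR for $\{A_1,\ldots,A_k\}$ produces an SDR for the whole collection, noting that the two sets of representatives are disjoint because the second uses only elements outside $U^*$.

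The main obstacle is Case 2. One must both locate the tight subcollection, handle it by induction, and then verify that Hall's condition survives when we strip the representatives of the tight part out of the remaining sets; the counting argument using the original condition on the combined subcollection is the decisive step. I would take some care to state the inductive hypothesis in a form that covers both subproblems (the smaller collection in Case 1 has $n-1$ sets; in Case 2 we invoke induction twice, on collections of sizes $k$ and $n-k$, both strictly less than $n$), so that the induction is genuinely well-founded.
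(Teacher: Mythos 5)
Your argument is the classical Halmos--Vaughan induction proof of Hall's theorem, and it is correct and complete: the necessity direction is immediate, the slack/tight case split is exhaustive, and the key counting step in the tight case (applying the original condition to the combined subcollection $\{A_1,\dots,A_k,A_{i_1},\dots,A_{i_j}\}$ and subtracting the $k$ elements of $U^*$) is handled properly, with both inductive calls made on collections of strictly smaller size. The paper itself offers no proof to compare against --- it states this lemma as a known classical result with a citation to Hall's 1935 article --- so there is no discrepancy to report; your write-up would serve as a self-contained proof of the cited lemma.
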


Suppose that we have a partial coloring of $G$, with only the edge set $T$ left uncolored. Let $L'(e)$ be the available color list for each $e\in T$. Lemma~\ref{lemma2} guarantees that if we are unable to complete the coloring by giving each edge its own color, then there exists a set $S\subseteq T$ with $|S|>|\bigcup\limits_{e\in S}L'(e)|$. Define the \emph{discrepancy}, $disc(S)=|S|-|\bigcup\limits_{e\in S}L'(e)|$. The following lemma is a generalization to the list version of Lemma 1 in~\cite{cranston}.

\begin{lemma}\label{lemma3} Let $T$ be the set of uncolored edges in a partially colored graph. Let $S$ be a subset of $T$ with maximum discrepancy. Then any coloring of $S$ can be extended to a coloring for $T$.
\end{lemma}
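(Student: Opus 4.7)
The plan is to prove Lemma~\ref{lemma3} by contradiction using Hall's Theorem (Lemma~\ref{lemma2}). Suppose $S \subseteq T$ has maximum discrepancy and that we have colored $S$ properly, obtaining updated available-color lists $L''(e)$ for each $e \in T \setminus S$; here $L''(e) = L'(e) \setminus \{c(f) : f \in N(e) \cap S\}$, where $c$ is the chosen coloring of $S$. I want to show a system of distinct representatives exists for $\{L''(e) : e \in T \setminus S\}$.

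If not, Lemma~\ref{lemma2} yields a subset $S' \subseteq T \setminus S$ with $|S'| > \bigl|\bigcup_{e \in S'} L''(e)\bigr|$. The key observation is that every color that was removed from some $L'(e)$ to form $L''(e)$ comes from some edge of $S$, and therefore lies in $\bigcup_{f \in S} L'(f)$. Hence for each $e \in S'$,
\[
L'(e) \subseteq L''(e) \,\cup\, \bigcup_{f \in S} L'(f),
\]
which gives the union bound
\[
\Bigl|\bigcup_{e \in S \cup S'} L'(e)\Bigr| \;\leq\; \Bigl|\bigcup_{e \in S'} L''(e)\Bigr| + \Bigl|\bigcup_{f \in S} L'(f)\Bigr|.
\]

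Since $S$ and $S'$ are disjoint, I can then compute
\[
\mathrm{disc}(S \cup S') \;=\; |S| + |S'| - \Bigl|\bigcup_{e \in S \cup S'} L'(e)\Bigr| \;\geq\; \mathrm{disc}(S) + \Bigl(|S'| - \Bigl|\bigcup_{e \in S'} L''(e)\Bigr|\Bigr).
\]
The assumed violation of Hall's condition means the second parenthesized term is strictly positive, so $\mathrm{disc}(S \cup S') > \mathrm{disc}(S)$, contradicting the maximality of $\mathrm{disc}(S)$. Therefore Hall's condition holds and the partial coloring extends to all of $T$.

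The only delicate point, and the one I would want to state cleanly, is the containment $L'(e) \subseteq L''(e) \cup \bigcup_{f \in S} L'(f)$: it relies on the fact that each edge in $S$ was colored from its own $L'$-list, so the ``newly forbidden'' colors for edges in $S'$ are all accounted for inside $\bigcup_{f \in S} L'(f)$. Once that containment is in hand, the rest is a short union/size computation and the contradiction with maximal discrepancy, exactly in the spirit of the non-list argument in~\cite{cranston}.
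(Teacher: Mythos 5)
Your proposal is correct and follows essentially the same route as the paper: both argue by contradiction via Hall's Theorem, extract a violating set $S'\subseteq T\setminus S$ after coloring $S$, and show $\mathrm{disc}(S\cup S')>\mathrm{disc}(S)$ using the fact that any color newly forbidden to an edge of $S'$ was drawn from some $L'(f)$ with $f\in S$ (your containment $L'(e)\subseteq L''(e)\cup\bigcup_{f\in S}L'(f)$ is exactly the paper's observation that a color in $R\setminus R_1$ must lie in $R_2$). No gaps; the argument matches the paper's proof of Lemma~\ref{lemma3}.
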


\begin{proof}Assume the claim is false. Since the coloring of $S$ cannot be extended to $T\backslash S$, some set of edges $S'\subseteq T\backslash S$ has positive discrepancy (after coloring $S$). We show that $disc(S\cup S')>disc(S)$. Let $R=\bigcup\limits_{e\in S\cup S'}L'(e)$, $R_1=\bigcup\limits_{e\in S}L'(e)$, $k=disc(S)$. Let $R_2=\bigcup\limits_{e\in S'}L'(e)$ after the edges of $S$ have been colored. Then $|S|=k+|R_1|$ and $|S'|\geq1+|R_2|$. Since $S$ and $S'$ are disjoint, we get
$$|S\cup S'|=|S|+|S'|\geq k+1+|R_1|+|R_2|>k+|R|.$$

The latter inequality holds since a color in $R\backslash R_1$ must be in $R_2$ and therefore we have $|R|=|R_1\cup R_2|\leq|R_1|+|R_2|$. Hence
$disc(S\cup S')=|S\cup S'|-|R|>k=disc(S)$. This contradicts the maximality of $disc(S)$. Thus, any coloring of $S$ can be extended to a coloring of $T$.
\end{proof}

In the last part of this section, we prove a useful lemma for our main proof.

Let $v$ be an arbitrary vertex of a graph $G$. Let $dist(v,v_1)$ denote the distance from vertex $v_{1}$ to $v$. And the distance from any edge $e$ to $v$ is denoted by $dist_{v}(e)=\min\limits_{u\in e}dist(v,u)$. Let distance class $i$ be the set of edges that are at distance $i$ from $v$. We call an edge ordering is \emph{compatible} with vertex $v$ if $e_{1}$ precedes $e_{2}$ in the ordering only when $dist_{v}(e_{1})\geq dist_{v}(e_{2})$. Similarly, if we specify a cycle $C$ in the graph, let $dist_{C}(e)=\min\limits_{u\in e,v\in C}dist(v,u)$ denote the distance from edge $e$ to $C$, so we can define distance class $i$ to be the set of edges that are at distance $i$ from $C$ and call an edge ordering is \emph{compatible} with $C$ if $e_{1}$ precedes $e_{2}$ in the ordering only when $dist_{C}(e_{1}) \geq dist_{C}(e_{2})$.
%However, there are always some order of the edges for which the color list of every edge is shorter than 25.
%Our aim in this paper is to construct a coloring order of the edges such that $\chi_{ls}'(G)\leq22$.
%Intuitively, we should color all the edges in distance class $i+1$ before we color any edge in distance class $i$.
%We define a \emph{partial coloring} to be a list strong edge-coloring except that some edges may be uncolored. And according to the distance class above, we can have a partial coloring that colored all edges except a local structure of a graph $G$. Thus, we introduce lemma 1 which is improved from lemma 1 of Cranston \cite{cranston}.

\begin{lemma}\label{lemma4} Let $G$ be a graph with maximum degree 4, $v$ is an arbitrary vertex of $G$, then $G-v$ is strongly 21-edge-choosable. If $C$ is a cycle of length at least 3 in $G$, then $G-E(C)$ is strongly 21-edge-choosable.
\end{lemma}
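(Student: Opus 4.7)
The plan is to prove both statements by the same greedy strategy. Fix a $21$-list assignment. Order the edges of $G-v$ (respectively $G-E(C)$) so that those farther from $v$ (respectively $C$) come first, and color the edges one by one, at each step choosing a color from $L(e)$ not used by any already-colored edge in the strong-neighborhood of $e$. By the compatibility of the ordering, the already-colored neighbors of an edge $e$ lie at distance at least $d:=dist_v(e)$ (resp.\ $dist_C(e)$) from $v$ (resp.\ $C$), so it suffices to show
\[
|N_G(e)\cap\{f: dist_v(f)\ge d\}|\le 20
\]
for every $e\in E(G-v)$ with $d\ge 1$, and the analogous bound for $G-E(C)$.

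For the first statement I would use a single ``parent'' vertex. Let $w$ be an endpoint of $e$ with $dist_v(w)=d$, let $w'$ be the other endpoint, and pick any neighbor $p$ of $w$ at distance $d-1$ from $v$ (so $p=v$ when $d=1$). Since $dist_v(w')\ge d>d-1$, we have $p\in N(w)\setminus\{w'\}$. Two observations finish the job. First, each of the $d(p)$ edges at $p$ lies in $N_G(e)$ (sharing vertex $p$ with $wp$, which is adjacent to $e$) and has an endpoint at distance $d-1$, hence distance $\le d-1$ from $v$. Second, the standard decomposition
\[
|N_G(e)|\le (d(w)-1)+(d(w')-1)+\sum_{z\in N(w)\setminus\{w'\}}(d(z)-1)+\sum_{z\in N(w')\setminus\{w\}}(d(z)-1)\le 24
\]
contains the summand $d(p)-1\le 3$ for $z=p$, so the maximum $24$ is reduced by at least $3-(d(p)-1)=4-d(p)$, giving $|N_G(e)|\le 20+d(p)$. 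Combining the two observations yields $|N_G(e)\cap\{f: dist_v(f)\ge d\}|\le 20$, as required.

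For the second statement the same parent argument works verbatim for any $e$ with $dist_C(e)\ge 1$: when $dist_C(e)=1$ the parent $p$ lies on $C$, and its two cycle edges (removed in $G-E(C)$) still belong to $N_G(e)$ at distance $0$ from $C$, so $|N_G(e)|$ and the count of excluded edges decrease by the same $2$, preserving the bound. The only truly new case is $dist_C(e)=0$: here an endpoint $x$ of $e$ lies on $C$, and I would count cycle edges in $N_G(e)$ directly---at least four when $|C|\ge 4$ (the two at $x$ plus one incident cycle edge at each cycle neighbor of $x$) and at least three when $|C|=3$. In the triangle case the chord $c_1c_2$ between the two cycle neighbors of $x$ is double-counted in the decomposition above, so $|N_G(e)|\le 23$, once again yielding $|N_{G-E(C)}(e)|\le 20$; when both endpoints of $e$ lie on $C$ even more cycle edges are removed, making the bound only stronger. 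I expect the main obstacle to be the short-cycle bookkeeping at $dist_C(e)=0$---particularly the $|C|=3$ case and the configurations where $e$ is a chord of $C$---where the overcounting in the standard estimate for $|N_G(e)|$ must be carefully matched against the number of removed cycle edges; the parent argument at $dist\ge 1$ is entirely uniform.
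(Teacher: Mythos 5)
Your proposal is correct and follows essentially the same route as the paper: a greedy coloring in an ordering compatible with $v$ (resp.\ $C$), using a ``parent'' vertex $p$ at distance $d-1$ to exhibit enough uncolored edges in $N(e)$, together with a direct count of the uncolored cycle edges (four when $|C|\geq 4$, three with $|N(e)|\leq 23$ when $|C|=3$) for edges incident with $C$. Your version is in fact slightly more careful than the paper's, since the bound $|N_G(e)|\leq 20+d(p)$ explicitly handles the case $d(p)<4$, which the paper glosses over by assuming four edges at $u$.
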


\begin{proof}We consider the case when only the edges incident with the single vertex $v$ are left uncolored. Color the other edges in an ordering that is compatible with $v$. During this process, suppose that we are coloring edge $e$ which is not incident with $v$, let $u$ be a vertex adjacent to an endpoint of $e$ that is on a shortest path from $e$ to $v$. Then none of the four edges incident with $u$ has been colored, since each edge incident with $u$ is in a lower distance class than $e$ (Fig.2). Thus, $|N'(e)|\leq24-4=20$, then we can always find a color available for $e$.

\begin{figure}[h]
  \centering
  \includegraphics[scale=0.45]{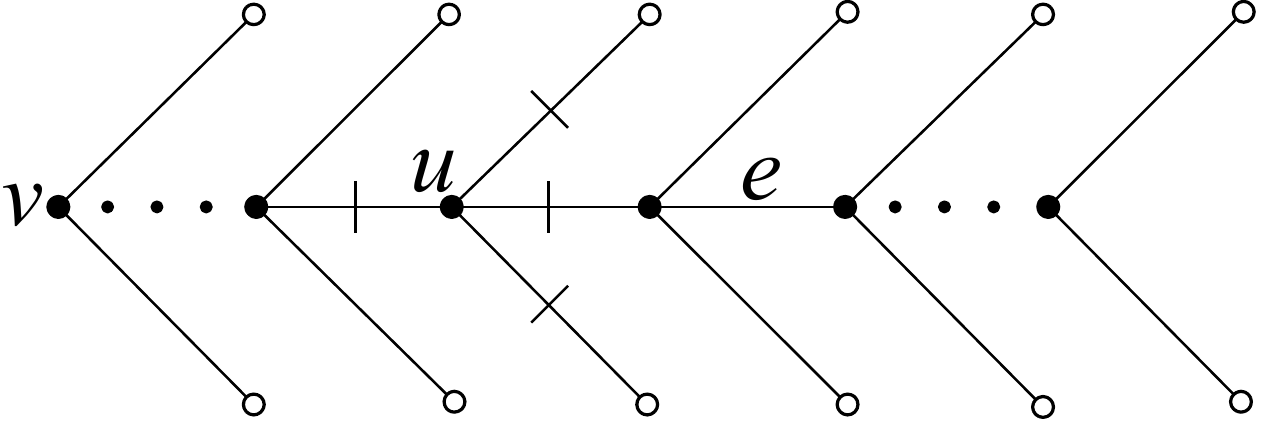}\\
  {Fig.2. The relationship between $u$ and $e$.}
\end{figure}%12.8*4.3

To prove the case when only the edges of $C$ are left uncolored, we color the other edges in an ordering compatible with $C$. The argument above holds for every edge that is not incident with $C$. If $e$ is incident with $C$ and $|C|\geq4$, then at least four edges in $N(e)$ are edges of $C$; so again $|N'(e)|\leq24-4=20$. If $e$ is incident with $C$ and $|C|=3$, then $|N(e)|\leq23$. The three uncolored edges of $C$ imply that $|N'(e)|\leq23-3=20$.
\end{proof}

Lemma~\ref{lemma4} shows that if $\Delta=4$, we can color nearly all edges with color list of length 21. In the rest of this paper, we show that we can always complete the coloring with color list of length 22.
%Theorem 1 is the main result of this paper. We give the proof of Theorem 1 by prove all cases sequentially in the remainder of the paper.
%
%

%%%%%%%%%%%%%%%%%%%%%%%%%%%%modify to here
\section{Basic Properties}

Let $G$ be a minimal counterexample to Theorem~\ref{th1}, which means that if there is a list assignment $L$, then $G$ is not strongly $L$-edge-colorable but any proper subgraph of $G$ is strongly $L$-edge-colorable. In this section, we show that $G$ is a simple 4-regular graph and $g(G)\geq6$.

%The three lemmas in this section are each proved using the same idea. We first color nearly all edges as in lemma 1. Then we show that due to the presence of a low degree vertex, a loop, a double edge, or a 3-cycle, it is possible to order the remaining uncolored edges so that a greedy coloring with $\chi_{ls}'(G)\leq21$ of the graphs is allowed.

\begin{lemma}\label{lemma5} $G$ is 4-regular.
\end{lemma}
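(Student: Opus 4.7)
The plan is to argue by contradiction, assuming that some vertex $v$ of $G$ satisfies $d(v) \leq 3$, and then exploit the minimality of $G$ to reach a contradiction. Specifically, I would pick any edge $e=vu$ incident with $v$. Since $G-e$ is a proper subgraph of $G$, by the minimality of $G$ as a counterexample to Theorem~\ref{th1}, the graph $G-e$ is strongly $L$-edge-colorable with the list assignment $L$ restricted to $E(G)\setminus\{e\}$. I would then show that any such partial coloring extends to a valid strong $L$-edge-coloring of $G$ simply by choosing an appropriate color for $e$.

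To produce this color, I would bound $|N(e)|$, since the number of colors forbidden at $e$ is at most $|N(e)|$. Decomposing $N(e)$ according to which endpoint of $e$ is closer to each edge, from the $v$-side there are at most $d(v)-1$ edges incident with $v$ other than $e$, and from each of the at most $d(v)-1$ neighbors of $v$ different from $u$, at most $\Delta-1=3$ additional edges appear in $N(e)$. This yields at most $4(d(v)-1)$ edges on the $v$-side. The same accounting on the $u$-side gives at most $4(d(u)-1)\leq 12$. Combining, $|N(e)|\leq 4(d(v)-1)+4(d(u)-1)\leq 8+12=20<22$, so the set $L'(e)$ of available colors at $e$ contains at least two colors. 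Picking any one of them extends the coloring to all of $G$, contradicting the fact that $G$ is a counterexample.

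There is essentially no hard step here; the argument is a standard minimality reduction. The only point requiring a little care is verifying the bound on $|N(e)|$ in the presence of loops and parallel edges that the paper explicitly permits: parallel edges at $v$ or $u$ are already counted among the $d(v)-1$ or $d(u)-1$ edges, a loop at $v$ contributes $2$ to $d(v)$ but reduces the number of distinct neighbors (which only improves the bound), and any coincidences between neighbors of $v$ and neighbors of $u$ can only cause double-counting in the above estimate, again in our favor. Thus the inequality $|N(e)|\leq 4(d(v)-1)+4(d(u)-1)$ is robust, and since $\Delta=4$ makes $22$ strictly larger than $20$, the extension step always succeeds when $d(v)\leq 3$, forcing every vertex of $G$ to have degree exactly $4$.
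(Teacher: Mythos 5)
There is a genuine gap in your argument, and it is the classical pitfall of strong edge-coloring: the property is not monotone under edge deletion in the way your minimality reduction requires. You color $G-e$ by minimality and claim that ``any such partial coloring extends to a valid strong $L$-edge-coloring of $G$ simply by choosing an appropriate color for $e$.'' But a strong edge-coloring of $G-e$, restricted to $E(G)\setminus\{e\}$, need not be a valid partial strong edge-coloring of $G$. Concretely, take $e=vu$, $e_1=vw_1$ and $e_2=uw_2$ with $w_1\neq w_2$; in $G$ these two edges lie on the path $w_1vuw_2$ of length three (their only common adjacent edge being $e$ itself), so they must receive distinct colors. In $G-e$ that path is destroyed, so a strong edge-coloring of $G-e$ may legitimately give $e_1$ and $e_2$ the same color, and then no choice of color for $e$ can repair the coloring. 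Your careful bound $|N(e)|\le 4(d(v)-1)+4(d(u)-1)\le 20$ is correct, but it only controls the colors forbidden \emph{at $e$}; it does not address conflicts among already-colored edges that exist in $G$ but not in $G-e$.

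The paper sidesteps this entirely by not invoking minimality in this lemma: it colors \emph{all} of $E(G)$ greedily in an ordering compatible with $v$ (non-increasing distance from $v$), so every conflict of $G$ itself is respected at every step. Edges far from $v$ always have at least four uncolored neighbors (the argument of Lemma~\ref{lemma4}), and the three edges at $v$, colored last, have at most $18$, $19$, $20$ colored neighbors respectively, leaving available colors from lists of size $22$. Your proposal could be salvaged by, after coloring $G-e$, uncoloring the at most $(d(v)-1)+(d(u)-1)\le 5$ edges adjacent to $e$ and recoloring them together with $e$ in a suitable order, but as written the extension step fails.
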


\begin{proof}Suppose $G$ is not 4-regular. Let $v$ be a vertex of $G$ with $d(v)=3$ (the case $d(v)<3$ is easier to prove). Color the edges in an ordering that is compatible with $v$. Let $e_{1}$, $e_{2}$, $e_{3}$ be the edges that are incident with $v$. If the edges are ordered $e_{1}$, $e_{2}$, $e_{3}$, we have $|N'(e_{1})|\leq18$, $|N'(e_{2})|\leq19$ and $|N'(e_{3})|\leq20$, which means $|L'(e_{1})|\geq4$, $|L'(e_{2})|\geq3$ and $|L'(e_{3})|\geq2$, so there are enough colors for $e_{1}$, $e_{2}$ and $e_{3}$.
\end{proof}

\begin{lemma}\label{lemma6} $G$ is simple.
\end{lemma}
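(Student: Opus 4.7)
The strategy is the standard minimal counterexample argument: assume $G$ contains a loop or a multi-edge, delete the offending edges to obtain a proper subgraph that is strongly $L$-edge-colorable by minimality, and then verify that enough colors remain in the prescribed lists to extend back to the deleted edges.

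First I would handle loops. Suppose $e$ is a loop at a vertex $v$. By Lemma~\ref{lemma5} we have $d(v)=4$, and since the loop contributes $2$ to this degree, at most two further edges are incident with $v$. Color $G-e$ by minimality. Because both endpoints of $e$ coincide at $v$, the edges of $N(e)\setminus\{e\}$ are confined to the (at most two) edges at $v$ together with the remaining edges at the (at most two) neighbors of $v$, so $|N(e)\setminus\{e\}|\le 2+2\cdot 3=8$. Hence at least $22-8=14$ colors in $L(e)$ are available to color $e$, contradicting the choice of $G$.

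Next suppose there are $k\ge 2$ parallel edges $e_1,\dots,e_k$ between vertices $u$ and $v$. Delete all of them and color the proper subgraph $G-\{e_1,\dots,e_k\}$ by minimality. Each of $u,v$ now contributes at most $4-k$ non-parallel incident edges, and each such edge together with the (at most three) edges at its far endpoint contributes at most $4$ already-colored edges to $N(e_i)$. Thus $|N'(e_i)|\le 2\cdot 4(4-k)=8(4-k)$, giving $|L'(e_i)|\ge 6,\,14,\,22$ for $k=2,3,4$ respectively. Since $e_1,\dots,e_k$ are pairwise adjacent and only require pairwise distinct colors, a greedy assignment (or a trivial application of Hall's theorem, Lemma~\ref{lemma2}) completes the coloring, yielding a contradiction.

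The only real thing to be vigilant about is that the generic bound $|N(e)|\le 24$ from Figure~1 was derived for a simple 4-regular graph; I would re-derive the neighborhood counts directly in each loop or multi-edge configuration (a loop paired with another loop at $v$, two parallel edges both landing on a single neighbor of $v$, and so on) to make sure no corner case inflates $|N'(e)|$ above the bounds stated above. In every such configuration the collapsing structure only shrinks the neighborhood, so the extension step always succeeds and the arithmetic is routine.
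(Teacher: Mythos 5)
Your loop case is sound: deleting a loop changes no vertex distances, so the strong $L$-edge-coloring of $G-e$ provided by minimality really is a partial strong edge-coloring of $G$, and your count $|N(e)\setminus\{e\}|\le 8$ matches the paper's bound $|N'(e_1)|\le 8$. (The paper reaches the same numbers by a different route: it never deletes anything, but colors all of $E(G)$ in an ordering compatible with $v$, so the edges at $v$ are colored last; this is also how it handles every later reducible configuration.)

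The multi-edge case has a genuine gap. When you delete \emph{all} $k\ge 2$ parallel edges between $u$ and $v$ and invoke minimality, you get a strong edge-coloring of $G-\{e_1,\dots,e_k\}$ with respect to the distances of that subgraph. But removing every $uv$-edge can increase $dist(u,v)$ from $1$ to something larger, so an edge $f$ incident with $u$ and an edge $g$ incident with $v$ --- which lie in each other's neighborhoods in $G$ and therefore must receive distinct colors --- may be far apart in the subgraph and legitimately receive the same color there. No assignment of colors to $e_1,\dots,e_k$ can repair such a conflict between two already-colored edges, so the extension step can fail no matter how large $|L'(e_i)|$ is; your bound $|N'(e_i)|\le 8(4-k)$ addresses the wrong obstruction. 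Two standard fixes: (i) do what the paper does and color all of $E(G)$ in an ordering compatible with a vertex $v$ of the $2$-cycle, so every partial coloring is taken with respect to $G$ itself, and the four edges at $v$, colored last, satisfy $|N'(e_1)|\le 17$, $|N'(e_2)|\le 18$, $|N'(e_3)|\le 16$, $|N'(e_4)|\le 17$; or (ii) delete only $k-1$ of the parallel edges, keeping one so that all distances in $G$ are preserved, and then extend greedily, since each deleted $e_i$ has $|N(e_i)\setminus\{e_i\}|\le 3+7(4-k)\le 17<22$.
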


\begin{proof}Suppose $G$ is not simple. If $G$ has a loop $e_1$ incident with a vertex $v$, let $e_{2}$, $e_{3}$ be the edges incident with $v$ which are not loops. Then color the edges in an ordering that is compatible with $v$. We have $|N'(e_{1})|\leq8$, $|N'(e_{2})|\leq16$, $|N'(e_{3})|\leq15$, thus there are many colors available for $e_{1}$, $e_{2}$ and $e_{3}$. Next we consider the other case when $G$ has multiple edges.

Let $v$ be a vertex in a 2-cycle. Color the edges in an ordering that is compatible with vertex $v$, let $e_{3}$, $e_{4}$ belong to the 2-cycle and $e_{1}$, $e_{2}$ be the other edges incident with $v$. Then $|N'(e_{1})|\leq17$, $|N'(e_{2})|\leq18$, $|N'(e_{3})|\leq16$ and $|N'(e_{4})|\leq17$, so there are available colors for $e_{1}$, $e_{2}$, $e_{3}$ and $e_{4}$.
\end{proof}

\begin{lemma}\label{lemma7} $G$ has no 3-cycle.
\end{lemma}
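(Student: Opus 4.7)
The plan is to assume $G$ contains a $3$-cycle $C = v_1v_2v_3$ and derive a contradiction by extending a coloring of $G - E(C)$ to all of $G$. By Lemma~\ref{lemma4}, $G - E(C)$ is strongly $21$-edge-choosable, and since $|L(e)| \geq 22 \geq 21$ we can strongly color $G - E(C)$ from the given list assignment $L$; only the three edges of $E(C)$ remain uncolored.

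The key computation is to bound $|N(e)|$ for each $e \in E(C)$. Take $e = v_1v_2$ and let $a_i, b_i$ denote the two neighbors of $v_i$ outside $C$, for $i \in \{1,2,3\}$. Compared with the generic bound $|N(e)| \leq 24$ of Fig.~1, the shared neighbor $v_3$ of the endpoints of $e$ forces a saving: edges at $v_3$ contribute only $2$ (namely $v_3a_3$ and $v_3b_3$) rather than $6$, since $v_1v_3$ and $v_2v_3$ are already counted among the edges at $v_1$ and $v_2$. A direct enumeration gives $|N(e)| \leq 6 + 2 + 4 \cdot 3 = 20$: the six other edges at $v_1, v_2$, the two edges $v_3a_3, v_3b_3$, and at most three edges at each of $a_1, b_1, a_2, b_2$. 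Simplicity of $G$ (Lemma~\ref{lemma6}) rules out degenerate identifications such as $v_3 = a_1$, and any further coincidences among these external vertices only decrease the count.

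Since the two uncolored edges $v_1v_3, v_2v_3$ lie in $N(e)$, the number of colored edges in $N(e)$ is at most $20 - 2 = 18$, hence $|L'(e)| \geq 22 - 18 = 4$. The same estimate holds for the other two edges of $C$. As the three edges of the triangle are pairwise adjacent, a strong coloring must assign them three distinct colors drawn from $L'(v_1v_2), L'(v_2v_3), L'(v_1v_3)$; with each list of size at least $4$, Hall's condition (Lemma~\ref{lemma2}) is trivially satisfied for every sub-collection, so a system of distinct representatives exists and extends the coloring to all of $G$, contradicting the choice of $G$. The main subtlety is the neighborhood bound $|N(e)| \leq 20$ for an edge on a triangle; once this is secured, Hall's theorem finishes the argument almost immediately.
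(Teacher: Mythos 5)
Your proposal is correct and follows essentially the same route as the paper: color $G-E(C)$ via Lemma~\ref{lemma4}, establish $|N(e)|\leq 20$ for each triangle edge so that $|L'(e)|\geq 4$ once the two uncolored companions are discounted, and finish on the three mutually adjacent edges. You merely spell out the neighborhood count and invoke Hall's theorem where a greedy $4,3,2$ argument (or the paper's terse ``we can finish the coloring'') suffices.
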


\begin{proof}Suppose $G$ has a 3-cycle $C$. By Lemma~\ref{lemma4} we color all edges except the edges of $C$. We observe that $|N(e)|\leq20$ for every edge $e$ in $C$, so $|L'(e)|\geq4$ and we can finish the coloring.
\end{proof}

\begin{lemma}\label{lemma8} $G$ has no 4-cycle.
\end{lemma}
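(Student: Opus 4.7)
Assume for contradiction that $G$ contains a $4$-cycle $C=v_1v_2v_3v_4$ with edges $e_i=v_iv_{i+1}$ (indices mod $4$). By Lemma~\ref{lemma4}, we first color $G-E(C)$ from the given $22$-lists; it remains to extend the coloring to $E(C)$. A neighborhood count, using that $G$ is simple and $4$-regular (Lemmas~\ref{lemma5}--\ref{lemma7}), gives $|N(e_i)|\le 23$ for every $e_i\in E(C)$: the generic bound $2\Delta(\Delta-1)=24$ is reduced by one because the opposite edge $e_{i+2}$ lies in $N(e_i)$ via both $e_{i-1}$ and $e_{i+1}$. Since the three other edges of $C$ lie in $N(e_i)$ and are uncolored, $|L'(e_i)|\ge 22-20=2$ for each $i$. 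Moreover any two edges of $C$ share a vertex or lie on a common path of length $3$, so the four colors assigned to $E(C)$ must be pairwise distinct.

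To produce four distinct colors I would apply Lemma~\ref{lemma1} (Combinatorial Nullstellensatz) to the Vandermonde polynomial
\[
P(x_1,x_2,x_3,x_4)=\prod_{1\le i<j\le 4}(x_i-x_j).
\]
Its only nonzero degree-$6$ monomials have exponent multiset $\{0,1,2,3\}$ and coefficient $\pm 1$, so Nullstellensatz succeeds as soon as, after reindexing, the sorted sizes $|L'(e_i)|$ dominate $(1,2,3,4)$. Since the bound $|L'(e_i)|\ge 2$ alone is too weak, the plan is to enlarge the available lists by temporarily \emph{uncoloring} a small set $F$ of edges near $C$. Taking, for example, $F=\{a,d\}$ where $a$ is an external edge at $v_1$ and $d$ is an external edge at $v_3$, one checks that $a\in N(e_1)\cap N(e_2)\cap N(e_4)$ and $d\in N(e_i)$ for every $i$; uncoloring $F$ therefore boosts the sorted sizes to at least $(3,4,4,4)$, so the Nullstellensatz colors $E(C)$, and the two edges of $F$ are completed last.

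\textbf{Main obstacle.} The hardest step is the final completion of $F$: with $|N(a)|$ possibly as large as $24$, the available list $L'(a)$ is not automatically non-empty. To close this gap I expect a case analysis on the local structure around $C$---shared external neighbors among $v_1,\ldots,v_4$ or incidental chords reduce $|N(a)|$ below $24$, while the strong-coloring constraints at the two endpoints of $a$ force repeated colors inside $N(a)$, both of which make $|L'(a)|\ge 1$ in most configurations. Remaining degenerate configurations can be handled by enlarging $F$ (for instance to include all eight external edges at the vertices of $C$, giving twelve uncolored edges whose available lists turn out to have size at least $7$ each) and completing them together via Lemma~\ref{lemma3} (the discrepancy lemma), reducing the problem to a finite SDR-type check. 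Either route ultimately combines Combinatorial Nullstellensatz for the $C$-edges with the Hall/discrepancy framework already set up in Section~2.
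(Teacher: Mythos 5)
Your setup is sound (the count $|N(c)|\le 23$ for a cycle edge is correct, and in fact an external edge at $v_1$ lies in $N(e_i)$ for all four $i$, not just three, so uncoloring $\{a,d\}$ gives $|L'(e_i)|\ge 4$ for every $i$ and the cycle itself is easy to color). But the step you flag as the ``main obstacle'' is not a technical loose end --- it is the entire content of the lemma, and neither of your two proposed routes closes it. For the first route: after everything else is colored, $|N(a)|$ can equal $24$ while $|L(a)|=22$, and nothing in a strong edge-coloring forces repeated colors inside $N(a)$, so $L'(a)$ can genuinely be empty; the degenerate configurations are not rare exceptions but the generic case. For the second route: with all twelve edges uncolored the pendant edges have $|L'|\ge 7$ and the cycle edges $|L'|\ge 10$, and Hall's condition for an SDR can simply fail (e.g.\ several pendant edges sharing a small union of available colors), so there is no ``finite check'' that succeeds unconditionally --- the lists live in an arbitrary universe.

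What the paper does at exactly this point, and what your proposal is missing, is a structural escape when Hall fails: it classifies the pendant edges into two ``packs'' according to which pair of opposite cycle vertices they hang from, analyzes ``adjacent pairs'' and ``diagonal edges'' to locate two pendant edges $a_i,a_j$ that are mutually at distance $\ge 2$, and then observes that a maximum-discrepancy set $S$ forces $|L'(a_i)\cup L'(a_j)|\le |S|-1\le 11$ while $|L'(a_i)|,|L'(a_j)|\ge 7$, hence $|L'(a_i)\cap L'(a_j)|\ge 3$. Assigning these two edges a \emph{common} color is the key move: it removes only one color (rather than two) from each cycle edge's list, which is what finally yields $|L'(c_i)|\ge 4$ and lets the cycle be completed; Lemma~\ref{lemma3} then extends the coloring from $S$ to the rest. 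Without this ``repeat a color on a nonadjacent pair'' idea (and the case analysis guaranteeing such a pair exists), your argument does not go through.
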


\begin{proof}Suppose $G$ has a 4-cycle $C$, with all edges labeled in Fig.3 (a). We denote $a_{i}$ and $b_{i}$ by \emph{pendant edges}. If two pendant edges share an endpoint not on $C$, then the two edges form an \emph{adjacent pair}. The only possibility of an adjacent pair is that $a_1$ or $b_1$ shares an endpoint with $a_3$ or $b_3$ (or similarly $a_2$ or $b_2$ shares an endpoint with $a_4$ or $b_4$). So we call ($a_1$, $b_1$, $a_3$, $b_3$) a \emph{pack} and ($a_2$, $b_2$, $a_4$, $b_4$) is also a pack. By Lemma~\ref{lemma4}, we can color all edges except the edges shown in Fig.3 (a). We will prove this lemma by considering the number of adjacent pairs.

\noindent\textbf{Case 1.} If there are at least two adjacent pairs, then we color the pendant edges by Lemma~\ref{lemma4}. We have $|N(c_i)|\leq21$ and so $|L'(c_i)|\geq4$ for each $i\in[4]$, thus we can color the four edges on $C$.

Next we will discuss the cases that we have exactly one adjacent pair and we have no adjacent pairs. These two cases both have at leat one pack of edges that share their endpoints only on $C$ (otherwise it belongs to Case 1). But it is possible that a pair of nonadjacent edges in a pack has an edge adjacent to them. We call the edge between the pair as \emph{diagonal edge}, and we can easily observe that there are at most 4 diagonal edges of a pack (see Fig.3 (b)). Before discuss the last two cases, we first give a claim about diagonal edges.

\begin{figure}[h]
  \centering
  \includegraphics[scale=0.75]{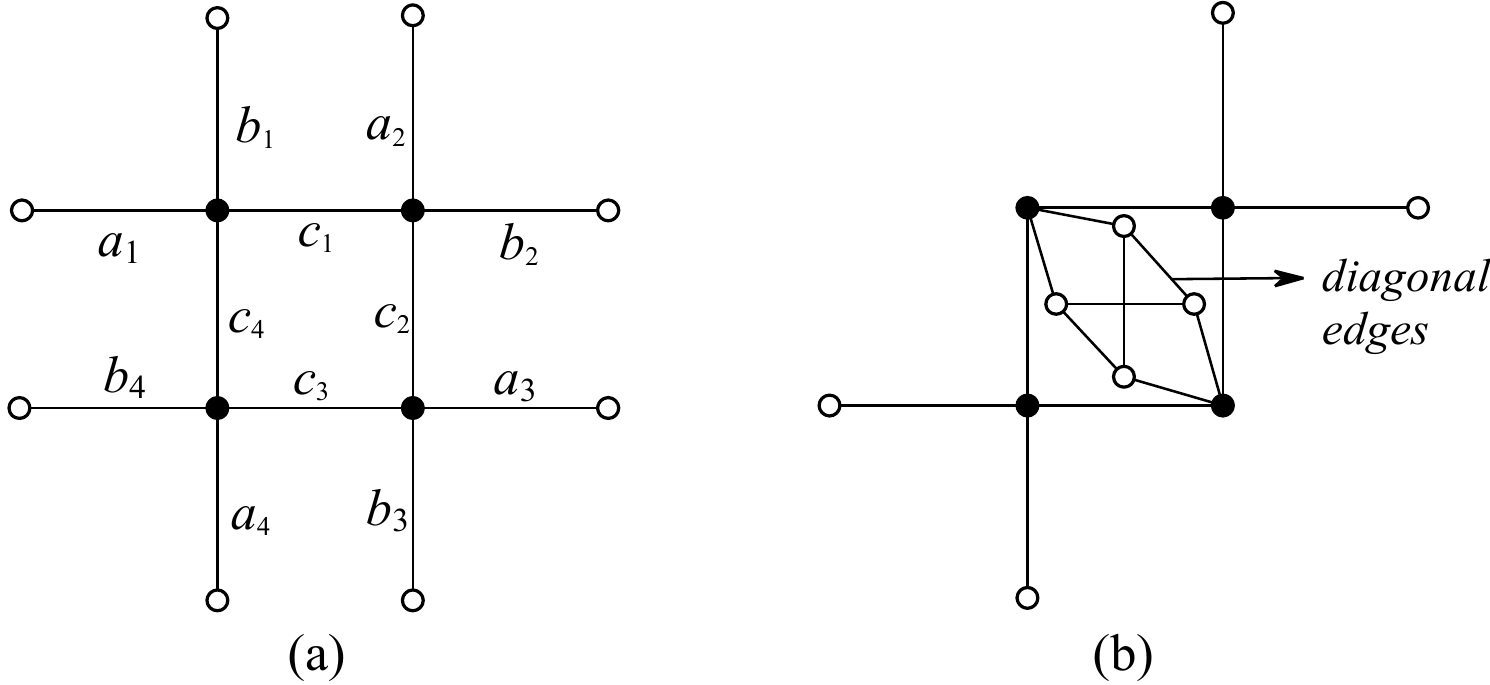}\\
  {Fig.3. (a) A 4-cycle in $G$. (b) Four diagonal edges of a pack.}
\end{figure}%15.15*6.95
%%%%%%%%%%%%%%%%%%%%%%%%%%%%%%%%%%%%%%%%%%%%%%%%%%%%%%%%%%%%%%%%%%%%%%%%%%%%%%%%%%%%%%%%%%%%%%%%%%%%%%%%%%%%%%%%%%%%%%%%%%%
\begin{claim}\label{claim1}
If there exist 4 diagonal edges of a pack, then $G$ is strongly 22-edge-choosable.
\end{claim}
\begin{proof}Observe that the neighborhood of a diagonal edge has size at most 21. Thus we color all edges except the four edges of $C$ and the four diagonal edges by Lemma~\ref{lemma4}. Now we color the four edges of $C$ (the four uncolored diagonal edges ensure there are enough colors available for edges of $C$). Lastly, we color the four diagonal edges.
\end{proof}

So according to Claim~\ref{claim1}, we only need to consider the case that there is at most 3 diagonal edges of a pack in the following discussion, that is, at least one pair of nonadjacent edges of a pack do not have diagonal edges.

\noindent\textbf{Case 2.} Suppose the uncolored edges contain exactly one adjacent pair. Without loss of generality, suppose edges $a_2$ and $a_4$ share an endpoint. So we have edges $a_1$, $b_1$, $a_3$ and $b_3$ as a pack and at least one pair of nonadjacent edges of this pack do not have diagonal edges. We suppose that $a_1$ and $a_3$ is such a pair.

Observe that $|L'(c_i)|\geq11$ for each $i\in[4]$ and $|L'(a_i)|=|L'(b_i)|\geq7$ for $i=1,3$. If there is a color $x\in \bigcup\limits_{i=1}^4(L'(a_i)\bigcup L'(b_i))$, but $x\notin L(c_j)$ for some $j\in[4]$ (suppose that color $x\in L'(a_1)\setminus L'(c_1)$), then we can give $a_1$ color $x$ and color the uncolored pendant edges by Lemma~\ref{lemma4}. As for the four edges of cycle $C$, we have $|L'(c_i)|\geq3$ for $i=2,3,4$ and $|L'(c_1)|\geq4$, so we can color the edges of the 4-cycle in the order $c_4$, $c_3$, $c_2$, $c_1$. Then if one of the $|L'(c_i)|>11$ (suppose that is $L'(c_1)$), we can also color the whole graph with similar coloring strategy above because after coloring pendant edges by Lemma~\ref{lemma4} we can have $|L'(c_i)|\geq3$ for $i=2,3,4$ and $|L'(c_1)|\geq4$ again.

Thus the last situation remaining is that $\bigcup\limits_{i=1}^4(L'(a_i)\bigcup L'(b_i))\subseteq L'(c_j)$ and $|L'(c_j)|=11$, for each $j\in[4]$. Then we have $L'(a_1)\bigcup L'(a_3)\subseteq L'(c_1)$ and $|L'(c_1)|=11$, so $|L'(a_1)\bigcup L'(a_3)|\leq11$, but $|L'(a_1)|\geq7$, $|L'(a_3)|\geq7$. That means there are at least 3 in $L'(a_1)\bigcap L'(a_3)$, so we can give $a_1$ and $a_3$ color $y\in L'(a_1)\bigcap L'(a_3)$, then color the uncolored pendant edge by Lemma~\ref{lemma4} again. It follows that $|L'(c_i)|\geq4$ for each $i\in[4]$.

\noindent\textbf{Case 3.} Finally, suppose that the uncolored edges contain no adjacent pairs. In this case we will use Lemma~\ref{lemma2} to simplify our proof. If we cannot assign a distinct color to each uncolored edge, then Lemma~\ref{lemma2} guarantees there exists a subset of the 12 uncolored edges with positive discrepancy. Let $S$ be a subset of the uncolored edges with maximum discrepancy. We observe that if $e$ is an edge of $C$, then $|L'(e)|\geq10$ and if $e$ is an pendant edge then $|L'(e)|\geq7$. We will assume that $S$ contains some edge of $C$, otherwise we can color $S$ by Lemma~\ref{lemma4}, then extend the coloring to the remaining uncolored edges by Lemma~\ref{lemma3}. Since $disc(S)>0$ and $|L'(e)|\geq10$ for each edge $e$ of $C$, $|S|$ is 11 or 12.

Suppose $|S|=12$, so two packs $(a_1$, $b_1$, $a_3$, $b_3)$ and $(a_2$, $b_2$, $a_4$, $b_4)$ are all in $S$. According to Claim~\ref{claim1}, each pack has a nonadjacent pair without a diagonal edge, we suppose that the two pairs are $(a_1$, $a_3)$ and $(a_2$, $b_4)$. Note that $|L'(a_1)|\geq7$, $|L'(a_3)|\geq7$, and $|L'(a_1)\bigcup L'(a_3)|\leq|\bigcup\limits_{e\in S}L'(e)|=|S|-disc(S)\leq11$, then $|L'(a_1)\bigcap L'(a_3)|\geq3$. Similarly, we can get $|L'(a_2)\bigcap L'(b_4)|\geq3$. So we can choose a color $x\in L'(a_1)\bigcap L'(a_3)$ to color $a_1$ and $a_3$. Because $|L'(a_2)\bigcap L'(b_4)|\geq3$, we can choose another color $y$ to color $a_2$ and $b_4$. After coloring the remaining uncolored pendant edges by Lemma~\ref{lemma4}, we have $|L'(c_i)|\geq4$ for each $i\in[4]$.

Suppose $|S|=11$. Then there is an uncolored edge $e\in S$. If $e\in C$, then we can use the same strategy as $|S|=12$ and get $|L'(c_i)|\geq4$ again. So discuss the case that $e$ is an pendant edge, suppose that $e$ is $a_1$. Since $(a_2$, $b_2$, $a_4$, $b_4)$ is still a pack, there is a nonadjacent pair without diagonal edge by Claim~\ref{claim1} (suppose that is $(a_2$, $a_4)$). Then again we can give $a_2$, $a_4$ the same color and color the remaining uncolored pendant edges of $S$ by Lemma~\ref{lemma4}. Since $|N(c_i)|\leq23$ for each $i\in[4]$, $a_1$ is uncolored and on the  neighborhood of each $c_i$, $a_2$, $a_4$ have the same color and are also in their neighborhood, we have $|L'(c_i)|\geq4$ again.

As we can give $S$ a coloring, we can have a coloring for the 12 uncolored edges by Lemma~\ref{lemma3}.
\end{proof}

\begin{lemma}\label{lemma9} $G$ has no 5-cycle.
\end{lemma}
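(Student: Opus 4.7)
The plan is to mirror the argument of Lemma~\ref{lemma8}: assume for contradiction that $G$ contains a 5-cycle $C = v_1v_2v_3v_4v_5v_1$ with edges $c_1,\ldots,c_5$, and since $G$ is $4$-regular by Lemma~\ref{lemma5}, denote by $a_i,b_i$ the two pendant edges at each vertex $v_i$. First I would use the previous lemmas to constrain the local structure: because $G$ is simple (Lemma~\ref{lemma6}), has no $3$-cycle (Lemma~\ref{lemma7}), and has no $4$-cycle (Lemma~\ref{lemma8}), no two pendant edges can share an endpoint off $C$, and no pendant edge can have its outer endpoint on $C$, since any such coincidence would produce a cycle of length at most $4$. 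Hence the ten pendant edges have ten distinct outer endpoints, all outside $V(C)$.

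Second, I would apply Lemma~\ref{lemma4} to color every edge of $G$ outside the $15$-edge set $U = E(C) \cup \{a_i,b_i : i \in [5]\}$ from its $21$-list. A direct count then shows that for every cycle edge $c_i$ the uncolored edges in $N(c_i)$ are $c_{i\pm 1}, c_{i\pm 2}$ together with the eight pendants at $v_{i-1}, v_i, v_{i+1}, v_{i+2}$, so $|L'(c_i)| \geq 10$; analogously, for every pendant edge $a_i$ the uncolored edges in $N(a_i)$ are the four cycle edges $c_{i-2}, c_{i-1}, c_i, c_{i+1}$ together with the five pendants $b_i, a_{i\pm 1}, b_{i\pm 1}$, so $|L'(a_i)| \geq 7$.

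Third, I would invoke Lemma~\ref{lemma3}: let $S \subseteq U$ have maximum discrepancy. If $disc(S) \leq 0$, Hall's theorem (Lemma~\ref{lemma2}) finishes the coloring by an SDR. Otherwise $|S| \geq |\bigcup_{e \in S} L'(e)| + 1$, and I would split into two cases. When $S$ contains no cycle edge, $|S| \leq 10$ and every list has size at least $7$, so any two pendants in $S$ share at least five common colors; I would then pick two pendants at cycle vertices two apart on $C$ (which are at distance at least $2$ in $G$, hence not mutually constrained), assign them a common color, and observe that this drops the discrepancy of the remaining set to $\leq 0$. When $S$ contains some cycle edge, the bound $|L'(c_i)| \geq 10$ forces $|S| \geq 11$, and a subcase analysis on $|S| \in \{11,\ldots,15\}$ applies the same shared-color-pair trick to pairs of non-adjacent pendants, and, when necessary, to pairs of cycle edges $c_i, c_{i+2}$ after further pre-coloring.

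The main obstacle will be the bookkeeping in the second case: one has to verify in every subcase that the pre-coloring steps reduce the discrepancy enough to restore Hall's condition, which requires careful tracking of how a shared color between two edges decrements the relevant list unions. A further complication is the possibility of an external chord linking the outer endpoints of pendants at cycle vertices two apart, which would make that particular pendant pair adjacent; since each such cycle-vertex pair $\{v_i,v_{i+2}\}$ hosts four choices of pendant pair and only a bounded number of chords are possible, one can always select a non-adjacent pendant pair on which to apply the trick. This freedom of choice, combined with the girth conditions from Lemmas~\ref{lemma6}--\ref{lemma8}, is what legitimizes the pre-coloring step in every subcase.
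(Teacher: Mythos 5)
Your proposal takes a genuinely different route from the paper, and it has a real gap. The paper does not attempt a Hall/discrepancy argument for the 5-cycle at all: it colors every edge except the five cycle edges by Lemma~\ref{lemma4}, then \emph{erases} the colors of four pendants $b_2,b_3,b_4,b_5$ (chosen, using the absence of 4-cycles, so that certain pairs among them are mutually unconstrained), records the resulting list-size lower bounds for the nine remaining uncolored edges, and finishes with the Combinatorial Nullstellensatz (Lemma~\ref{lemma1}), verifying by computer that the coefficient of $x_1^3x_2^4x_3^5x_4^4x_5^4x_6^2x_7^3x_8^2x_9^2$ in the conflict polynomial equals $-1$.

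The gap in your plan is quantitative and sits exactly where you defer the work. The shared-color trick of Lemma~\ref{lemma8} is powered by the inequality $|L'(p)\cap L'(q)|\ge |L'(p)|+|L'(q)|-|\bigcup_{e\in S}L'(e)|\ge 7+7-11=3$, which holds there because $|S|\le 12$. Here $|S|$ can be as large as $15$, so $|\bigcup_{e\in S}L'(e)|$ can have size $14$ and two pendant lists of size $7$ need not intersect at all; already for $|S|=13$ or $14$ the guaranteed intersection is too small to supply \emph{five distinct} pair-colors (any two of the five distance-two pendant pairs conflict, since they contain pendants at equal or adjacent cycle vertices, so merging all five pairs requires an SDR from the five pairwise intersections --- a Hall condition you never verify, on top of the consistency condition for avoiding externally adjacent pendant pairs). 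And the stakes are high: the five cycle edges are pairwise conflicting, and after the pendants are colored each satisfies only $|L'(c_i)|\ge 2+k_i$, where $k_i$ counts the merged pairs lying wholly inside $N(c_i)$; only if \emph{all five} pairs merge does every $c_i$ reach the five available colors needed to finish greedily, and if even one merge fails you are left with five pairwise conflicting edges whose lists may coincide in a common $4$-set. So the ``bookkeeping in the second case'' is not bookkeeping --- it is the entire difficulty, and the paper's recourse to a computer-checked Nullstellensatz coefficient is strong evidence that it does not close by these means.
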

%from here to modify+ don't forget to explain why choose b1,b2,b3,b4 as the edge in structure
\begin{proof}As shown in Fig.4 (a), suppose $G$ has a 5-cycle $C$, with similar label strategy as in Lemma~\ref{lemma8}. We also refer to the edges labeled by $a_i$ and $b_i$ as pendant edges. We claim that at least one of $a_4$ and $b_4$ is not in the neighborhood of $b_2$; for otherwise, we have a 4-cycle. Thus we can assume that there is no edge between $b_1$ and $b_3$. Similarly, we assume that there is no edge between the following pairs: $(b_2, b_5)$, $(b_5, b_3)$. We color all edges except five edges on $C$ by Lemma~\ref{lemma4} and then erase the colors of $b_2$, $b_3$, $b_4$, $b_5$. Note now we have $|L'(c_i)|\geq5$ for $i=1,2,4,5$, $|L'(c_3)|\geq6$, $|L'(b_2)|\geq3$, $|L'(b_5)|\geq3$ and $|L'(b_3)|\geq4$, $|L'(b_4)|\geq4$. We relabel the edges as shown in Fig.4 (b). Then for every $x_i$, $S_i=L'(x_i)$. We will color $x_i$ with color $s_i\in S_i$ ($i\in[9]$) respectively. If $s_i-s_j\neq0$ for any two edges $x_i$, $x_j$ ($i\in[9]$) that are possibly adjacent or lie on a path of length 3, then we have a strong edge-coloring. Hence we will finish the coloring if there exist color $s_i\in S_i$ for each $i\in[9]$ such that polynomial $P(s_1, \cdots, s_n)\neq0$ where
\begin{align*}
P(x_1,x_2,x_3,x_4,x_5,x_6,x_7,x_8,x_9)=&\prod\limits_{2\leq i\leq7}(x_1-x_i)\prod\limits_{3\leq j\leq8}(x_2-x_j)\prod\limits_{4\leq k\leq9}(x_3-x_k)\\
&(x_1-x_9)(x_4-x_5)(x_4-x_7)(x_4-x_8)(x_4-x_9)\\
&(x_5-x_6)(x_5-x_8)(x_5-x_9)(x_6-x_7)(x_7-x_8)\\
&(x_8-x_9).
\end{align*}

\begin{figure}[h]
\begin{center}
  \includegraphics[scale=0.7]{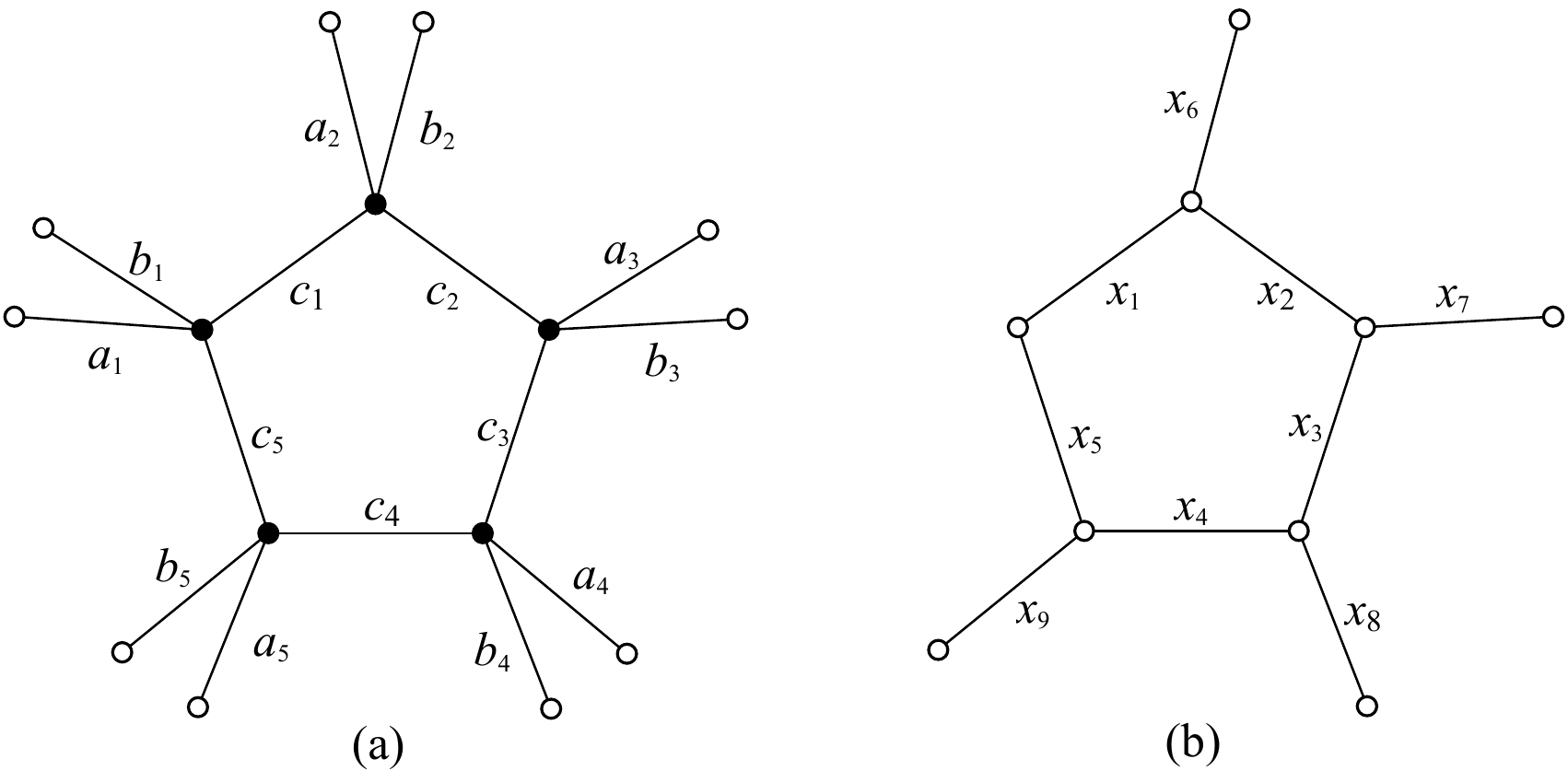}\\
\end{center}
{Fig.4. (a) A 5-cycle in $G$ with pendant edges. (b) The local structure that uses Lemma~\ref{lemma1}.}
\end{figure}%17.3*8.5

We use MATLAB to calculate the coefficients of specific monomials. The codes are listed in the last section. By MATLAB, we obtain a coefficient $c_P(x_1^3x_2^4x_3^5x_4^4x_5^4x_6^2x_7^3x_8^2x_9^2)=-1\neq0$. According to Lemma~\ref{lemma1}, since $\deg(P)=29=\sum\limits^n_{i=1}k_i$ and $|S_i|> k_i$ ($i\in[9]$), there exist $s_i\in S_i$ ($i\in[9]$) such that $P(s_1,\cdots,s_9)\neq0$. Coloring $x_1,\cdots,x_9$ with $s_1,\cdots,s_9$ respectively and then we obtain a coloring which makes $G$ strongly 22-edge-choosable.
\end{proof}

\section{Proof of Theorem~\ref{th1}}
In this section, we give a proof of Theorem~\ref{th1}. Note that $G$ is a minimal counterexample to Theorem~\ref{th1}. By Lemma~\ref{lemma5}- \ref{lemma9}, we know that $G$ is a simple 4-regular graph with $g(G)\geq6$.

\begin{proof}[Proof of Theorem~\ref{th1}]Let $v$ be an arbitrary vertex in $G$, with the 4 incident edges labeled $e_i$ ($i\in[4]$) in clockwise order and denote $A_i$ ($i\in[4]$) by the edge set which contains the three edges adjacent to $e_i$ but not incident with $v$ (as shown in Fig.5). By Lemma~\ref{lemma4}, we know that $G-v$ is strongly 21-edge-choosable. Now we prove the following claim which follows from the similar argument as in the proof of Lemma~\ref{lemma4}.

\begin{figure}[h]
  \centering
  \includegraphics[scale=0.7]{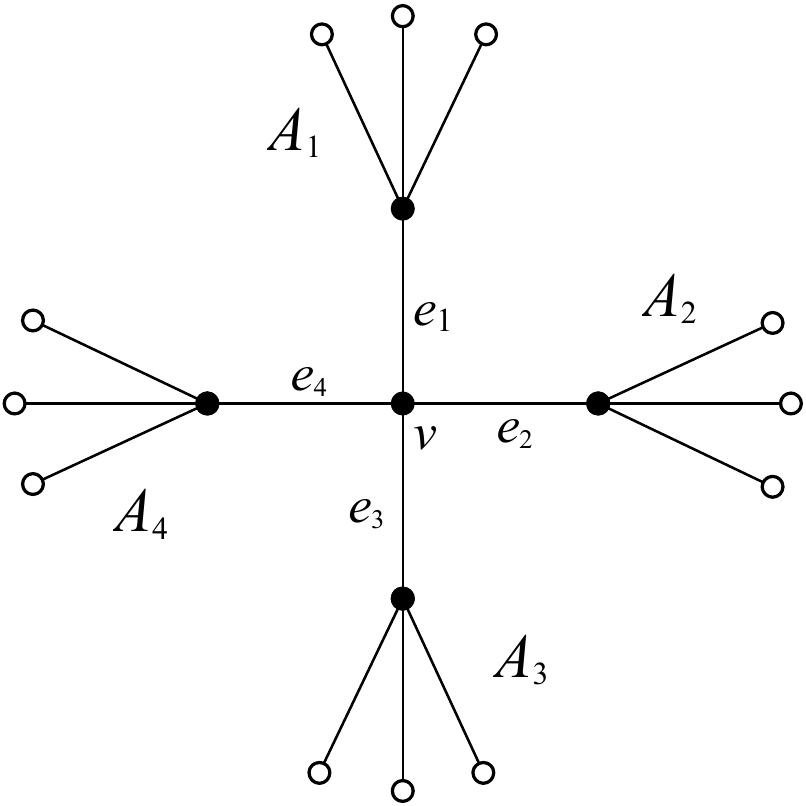}\\
{Fig.5. Vertex $v$ has degree 4 and $g(G)\geq6$.}
\end{figure}%8.2*8.2

\begin{claim}\label{claim2}
If we precolor one edge from each $A_i$, then $G-v$ is strongly 22-edge-choosable.
\end{claim}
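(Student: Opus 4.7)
The plan is to mimic the greedy template of Lemma~\ref{lemma4}: after fixing the four precoloured edges $f_i \in A_i$ ($i \in [4]$), I would colour the remaining edges of $G - v$ in an ordering compatible with $v$ (farther edges first). My hope is that the extra colour afforded by $|L(e)| = 22$ (versus $21$ in Lemma~\ref{lemma4}) is exactly enough to absorb the presence of the four precoloured edges.

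For an uncoloured edge $e$ at distance $d \geq 3$ from $v$, I would reuse Lemma~\ref{lemma4}'s vertex-$u$ trick: pick $u$ adjacent to the endpoint of $e$ lying on a shortest $e$-$v$ path, so that $u$ is at distance $d - 1 \geq 2$ from $v$. Since $g(G) \geq 6$, the vertex $u$ has at most one neighbour at distance $1$ from $v$ (two such would close a $4$-cycle through $v$), so at most one of the four edges at $u$ lies in some $A_j$ and could be precoloured; hence at least three edges at $u$ remain uncoloured, giving $|N'(e)| \leq 24 - 3 = 21$ and $|L'(e)| \geq 1$. For $d = 2$ the vertex $u$ coincides with some $u_i$ and has only three incident edges in $G - v$, namely those of $A_i$ (one precoloured, two uncoloured); a short girth check (the same $4$- and $5$-cycle arguments as in Lemma~\ref{lemma8}) rules out $e_j \in N_G(e)$ for $j \neq i$, so $|N_{G-v}(e)| \leq 23$ and again $|N'(e)| \leq 21$.

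The genuine obstacle is the case $d = 1$, when $e$ is an uncoloured edge of some $A_i$ and the vertex-$u$ trick collapses (the only vertex on a shortest $e$-$v$ path adjacent to an endpoint of $e$ is $v$ itself). For such $e$ I would first note $|N_{G-v}(e)| \leq 24 - 4 = 20$, since all four edges $e_1, \dots, e_4$ lie in $N_G(e)$. The key structural input is that no edge of $A_j$ with $j \neq i$ belongs to $N(e)$: writing $e = u_i w$ and an arbitrary $A_j$-edge as $u_j w''$, I would run through the four possible adjacencies between $\{u_i, w\}$ and $\{u_j, w''\}$ and exhibit a $3$-, $4$-, or $5$-cycle through $v$ in each, contradicting $g(G) \geq 6$. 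Granted this, after the $d \geq 2$ phase at most one uncoloured edge (another edge of $A_i$ at $u_i$) can still lie in $N(e)$, so $|N'(e)| \leq 20$ and $|L'(e)| \geq 2$, easily enough to finish. The main technical effort is therefore concentrated in the short-cycle analysis at $d = 1$; the $d \geq 2$ cases amount to Lemma~\ref{lemma4}'s argument with minor bookkeeping for the precoloured edges.
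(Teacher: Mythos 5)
Your proposal is correct and follows essentially the same route as the paper: colour $G-v$ greedily in an ordering compatible with $v$ and observe that, by $g(G)\geq 6$, at most one of the four lower-distance-class edges at the auxiliary vertex $u$ can be precoloured, so $|N'(e)|\leq 21$ while $|L(e)|=22$. Your case split by $\mathrm{dist}_v(e)$ merely makes explicit the girth check that the paper states parenthetically, and the extra girth arguments in your $d=1$ and $d=2$ cases are harmless but unnecessary, since $|N'(e)|\leq|N_{G-v}(e)|\leq 20$ (resp.\ the count at $u=u_i$) already suffices.
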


\begin{proof} We adapt idea of Lemma~\ref{lemma4} to show that in the presence of four specific precolored edges $G-v$ is strongly 22-edge-choosable. Lemma~\ref{lemma4} argued that there are at least four uncolored edges in the neighborhood of the edge being coloring, so $|N'(e)|\leq20$. The same argument applies in this case except that possibly one of the edges that was uncolored in Lemma~\ref{lemma4} is precolored. Hence $|N'(e)|\leq21$ (this follows from the fact that at most one of the four uncolored edges in Lemma~\ref{lemma4} that are incident with the same vertex $u$ is precolored). Thus, $G-v$ is strongly 22-edge-choosable.
\end{proof}

So according to Claim~\ref{claim2}, we can precolor four edges from different $A_i$ ($i\in[4]$). Let $A_i=\{a_i,b_i,c_i\}$ and denote the color list of $A_i$ by $L(A_i)=L(a_i)\bigcup L(b_i)\bigcup L(c_i)$ for each $i\in[4]$. Next we consider the relationship between $L(A_i)$ ($i\in[4]$). For simplicity, in the following proof, if a color belongs to $L(A_i)$, without loss of generality, we assume it belongs to $L(a_i)$.

\noindent\textbf{Case 1.} If $L(A_i)\bigcap L(A_j)=\phi$ ($i\in[4]$), then we have $|\bigcup\limits_{i=1}^{4}L(A_i)|\geq4\times22>3\times22\geq |\bigcup\limits_{i=1}^{3}L(e_i)|$. So we can get a color $x_1\in\bigcup\limits_{i=1}^{4}L(A_i)\setminus\bigcup\limits_{i=1}^{3}L(e_i)$, then we assume $x_1\in L(a_1)$. Similarly, we can get color $x_2\in \bigcup\limits_{i=2}^{4}L(A_i)\setminus\bigcup\limits_{i=1}^{2}L(e_i)$ (suppose $x_2\in L(a_2)$) and $x_3\in L(A_3)\bigcup L(A_4)\setminus L(e_1)$ (suppose $x_3\in L(a_3)$). Then we can precolor $a_1$, $a_2$, $a_3$ with $x_1$, $x_2$, $x_3$ respectively, and color all uncolored edges except incident edges of $v$ according to Claim~\ref{claim2}. Then we have $|L'(e_1)|\geq4$, $|L'(e_2)|\geq3$, $|L'(e_3)|\geq2$ and $|L'(e_4)|\geq1$, so we can color the four edges in the order $e_4$, $e_3$, $e_2$, $e_1$.

\noindent\textbf{Case 2.} If $\bigcap\limits_{i=1}^{4}L(A_i)\neq\phi$, we assume $x\in\bigcap\limits_{i=1}^{4}L(A_i)$. Then we give color $x$ to $a_i$ for each $i\in[4]$ (the four edges can receive the same color since $g(G)\geq6$) and color all uncolored edges except the four incident edges of $v$. We observe that each $e_i$ satisfies $|L(e_i)|\geq4$.

\noindent\textbf{Case 3.} If there exists a common color in the list of some three edge sets but not of four edge sets $A_i$, suppose $x\in \bigcap\limits_{i=1}^{3}L(A_i)\setminus L(A_4)$, then we can color $a_i$ $(1\in[3])$ with $x$. And we also get that $|L(A_1)\bigcup L(A_4)|\geq22+1=|L(e_1)|+1$, so there is at least a color $y$ such that $y\in L(A_1)\bigcup L(A_4)\setminus L(e_1)$.

If we can find a color $y\in L(A_4)\setminus L(e_1)$, then we give color $y$ to $a_4$ and color the remaining edges of $G-v$ by Claim~\ref{claim2}. So we have $|L'(e_1)|\geq4$, $|L'(e_i)|\geq3$ $(2\leq i\leq4)$ and color the four edges in the order $e_4$, $e_3$, $e_2$, $e_1$. If we cannot find such $y$, then we know that $L(A_4)=L(e_1)$. Since $x\notin L(A_4)$, $x\notin L(e_1)$. We color the remaining edges of $G-v$ by Claim~\ref{claim2} and get $|L'(e_1)|\geq4$, $|L'(e_i)|\geq3$ $(2\leq i\leq4)$ again.

\noindent\textbf{Case 4.} If there exists a common color in the list of some two edge sets but not of three edge sets, suppose that $L(A_1)\bigcap L(A_2)\neq\phi$ but $L(A_1)\bigcap L(A_2)\bigcap L(A_i)=\phi$ ($i=3$ or 4). Suppose $x\in L(A_1)\bigcap L(A_2)$. Before discussing this case, we provide a claim.

\begin{claim}\label{claim3}
In Case 4, if there exists a color $x\in L(A_i)\bigcap L(A_j)$ $(1\leq i<j\leq4)$, then $x\in \bigcap\limits_{k=1}^{4}L(e_k)$.
\end{claim}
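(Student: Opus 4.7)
I would prove Claim~\ref{claim3} by contradiction. Assume $x \in L(A_i) \cap L(A_j)$ for some pair $i < j$ but $x \notin L(e_k)$ for some $k \in [4]$; by the WLOG convention of the paper, take $i=1$, $j=2$ and $x \in L(a_1) \cap L(a_2)$. The goal is to construct a strong $L$-edge-coloring of $G$, contradicting the minimality of $G$ as a counterexample.

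The first step is to observe that $a_1$ and $a_2$ may both receive color $x$. Writing $e_i = vu_i$ and $a_i = u_iw_i$ for $i=1,2$, no coincidence of endpoints is possible under $g(G)\ge 6$: $w_2 = u_1$ forces the 3-cycle $v u_1 u_2 v$, $w_1 = w_2$ forces the 4-cycle $u_1 v u_2 w_1 u_1$, and no edge at distance one between $a_1, a_2$ exists either ($w_1w_2 \in E$ would yield a 5-cycle). So set $c(a_1) = c(a_2) = x$, select representatives $a_3 \in A_3, a_4 \in A_4$ together with colors $y_3 \in L(a_3), y_4 \in L(a_4)$ (chosen strategically, see below), and invoke Claim~\ref{claim2} to extend the partial coloring to all of $G-v$. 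The uncolored edges are exactly $e_1, e_2, e_3, e_4$, which pairwise conflict at $v$.

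For each $i \in [4]$, $|N(e_i)\setminus\{e_i\}| \le 24$; the three other $e_j$'s are uncolored, so at most $21$ colored neighbors contribute to $U_i$. Since $a_1$ and $a_2$ both carry $x$, $|U_i| \le 20$, giving $|L'(e_i)| \ge 2$. For $e_k$ specifically, $x \in U_k \setminus L(e_k)$ supplies an extra save, so $|L'(e_k)| \ge 3$. Further savings would be engineered by choosing $y_3 \in L(a_3)\setminus L(e_i)$ (possible whenever $L(a_3)\ne L(e_i)$, both having size $22$) and similarly $y_4$, and by varying the representatives within $A_3, A_4$ if necessary. Once the bounds can be arranged in the form $|L'(e_{\sigma(r)})| \ge r$ for $r=1,2,3,4$ under some permutation $\sigma$ of $[4]$, the coloring of $e_1,\ldots,e_4$ is completed via Combinatorial Nullstellensatz (Lemma~\ref{lemma1}) applied to the Vandermonde polynomial $P(x_1,x_2,x_3,x_4) = \prod_{1\le i<j\le 4}(x_i-x_j)$, whose monomial $x_{\sigma(1)}^0 x_{\sigma(2)}^1 x_{\sigma(3)}^2 x_{\sigma(4)}^3$ has coefficient $\pm 1 \ne 0$.

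The principal obstacle is closing the gap between the baseline $|L'(e_i)|\ge 2$ (with $|L'(e_k)|\ge 3$) and the Nullstellensatz requirement of one list of size at least $4$. This forces a careful case analysis: in the generic case, enough freedom in the choice of $a_3, a_4, y_3, y_4$ is available to manufacture the additional savings; in the degenerate configuration where every edge of $A_3$ or $A_4$ has list equal to some $L(e_i)$, the Case~4 hypotheses (the pairwise common color $x$ lies in $L(A_1)\cap L(A_2)$ but $x \notin L(A_3)$) should be exploited directly: one then has $L(e_i)=L(A_3)\not\ni x$ for each $i$ in that extremal scenario, so $|L'(e_i)|\ge 3$ uniformly, and a further structural argument (or a more refined invocation of Lemma~\ref{lemma3} on the clique $\{e_1,\ldots,e_4\}$) must supply the last needed color.
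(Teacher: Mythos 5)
Your overall strategy coincides with the paper's: assume $x\in L(A_1)\cap L(A_2)\setminus L(e_k)$, put $x$ on $a_1$ and $a_2$ (your girth check that this is legitimate is correct and welcome), choose representatives of $A_3,A_4$ with colors avoiding suitable $L(e_i)$'s, extend by Claim~\ref{claim2}, and finish the four edges at $v$ greedily (your Vandermonde/Nullstellensatz step is just greedy coloring in disguise). The counting is also right: $21$ colored neighbours, one saving from the repeated colour $x$, a second saving at $e_k$ because $x\notin L(e_k)$.

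However, you explicitly leave the decisive case open: you write that in the degenerate configuration ``a further structural argument \ldots must supply the last needed colour,'' and your fallback of invoking Lemma~\ref{lemma3} on the clique $\{e_1,\dots,e_4\}$ does not work, since four lists of size $3$ on a $4$-clique need not admit a proper colouring (all four could be contained in a common $3$-set). This degenerate case is precisely where the content of the proof lies, and the paper closes it with a concrete observation you are missing: all lists have size exactly $22$, so if no colour of $L(A_3)$ avoids $L(e_2)$ then $L(A_3)=L(e_2)$, and if moreover no colour of $L(A_4)$ avoids $L(e_2)$ then $L(A_4)=L(e_2)$ as well, whence $L(A_3)=L(A_4)$. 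One can then place a \emph{common} colour on $a_3$ and $a_4$, which is a third repeated-colour saving visible to every $e_i$; combined with $x\notin L(e_1)$ and $x\notin L(e_2)$ (the latter because $x\notin L(A_3)=L(e_2)$ by the Case~4 hypothesis), this yields $|L'(e_1)|,|L'(e_2)|\geq4$ and $|L'(e_3)|,|L'(e_4)|\geq3$, and greedy finishes. In the non-degenerate branches the same ``$|L(A_i)\cup L(A_j)|\geq 23>|L(e_\ell)|$'' pigeonhole produces the colours $y,z$ you allude to. Without this equality-of-lists argument your proof is incomplete.
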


\begin{proof} Assume the claim is false. Suppose that $x\in L(A_1)\bigcap L(A_2)\setminus L(e_1)$, first give $x$ to $a_1$ and $a_2$. Since $x\notin L(A_3)\bigcup L(A_4)$ and $|L(A_1)\bigcup L(A_3)|\geq22+1=|L(e_i)|+1$, there must be a color $y\in L(A_1)\bigcup L(A_3)\setminus L(e_i)$  for each $(i\in[4])$.

Similar to the last part of Case 3, if we can find a color $y\in L(A_3)\setminus L(e_2)$, then we give $y$ to $a_3$. Then similarly, we have a color $z\in L(A_2)\bigcup L(A_4)\setminus L(e_2)$. If we can find such $z\in L(A_4)$, then give $z$ to $a_4$ and color all uncolored edges except incident edges of $v$ by Claim~\ref{claim2}. So we have $|L'(e_1)|\geq3$, $|L'(e_2)|\geq4$, $|L'(e_i)|\geq2$ $(i=3, 4)$ and can color the four edges in the order $e_4$, $e_3$, $e_1$, $e_2$. If we cannot find such $z$, then we know that $L(A_4)=L(e_2)$ and so $x\notin L(e_2)$. We color all uncolored edges except incident edges of $v$ by Claim~\ref{claim2} and have $|L'(e_1)|\geq3$, $|L'(e_2)|\geq4$, $|L'(e_i)|\geq2$ $(i=3,4)$ again. Lastly, we color the four edges in order $e_4$, $e_3$, $e_1$, $e_2$.

If we cannot find a color like $y\in L(A_3)\setminus L(e_2)$, then we know that $L(A_3)=L(e_2)$ and $x\notin L(e_2)$. Then similarly, we have a color $z\in L(A_2)\bigcup L(A_4)\setminus L(e_2)$, if we can find such $z\in L(A_4)$, then give color $z$ to $a_4$ and color all uncolored edges except incident edges of $v$ by Claim~\ref{claim2}. So we have $|L'(e_1)|\geq3$, $|L'(e_2)|\geq4$, $|L'(e_i)|\geq2$ $(i=3, 4)$ and color the four edges in the order $e_4$, $e_3$, $e_1$, $e_2$. If we cannot find such $z$, then we know that $L(A_4)=L(e_2)$ and thus $L(A_3)=L(A_4)$, so give the same color in $L(A_3)$ to $a_3$, $a_4$ and color all uncolored edges except incident edges of $v$ by Claim~\ref{claim2}. We will get $|L'(e_1)|\geq4$, $|L'(e_2)|\geq4$, $|L'(e_i)|\geq3$ $(i=3,4)$ and can color the four edges in the order $e_4$, $e_3$, $e_2$, $e_1$.
\end{proof}

Since we assume $L(A_1)\bigcap L(A_2)\neq\phi$ but $L(A_1)\bigcap L(A_2)\bigcap L(A_i)=\phi$ ($i=3$ or 4), according to Claim~\ref{claim3}, we infer that $\sum\limits_{1\leq i<j\leq4}|L(A_i)\bigcap L(A_j)|\leq22$. Then by inclusive-exclusive principle, we have that $|\bigcup\limits_{1\leq i\leq4}L(A_i)|=4\times22-\sum\limits_{1\leq i<j\leq4}|L(A_i)\bigcap L(A_j)|\geq3\times22$ and $|\bigcup\limits_{i=1}^{3}L(e_i)|<3\times22$, so we can find a color $y\in\bigcup\limits_{i=1}^{4}L(A_i)\setminus\bigcup\limits_{i=1}^{3}L(e_i)$ (suppose that $y\in L(A_1)$). Similarly we can get color $z\in\bigcup\limits_{i=2}^{4}L(A_i)\setminus L(e_1)\bigcup L(e_2)$ (suppose that $z\in L(A_2)$). And we also know $|L(A_3)\bigcup L(A_4)|\geq22$.

If $|L(A_3)\bigcup L(A_4)|=22$, then we have $L(A_3)=L(A_4)$ and get $\sum\limits_{1\leq i<j\leq4}|L(A_i)\bigcap L(A_j)|\geq22+1$ which is a contradiction.

If $|L(A_3)\bigcup L(A_4)|>22$, there must be a color $w\in L(A_3)\bigcup L(A_4)\setminus L(e_1)$ (suppose that $w\in L(A_3)$), so we can give color $y$, $z$, $w$ to $a_1$, $a_2$, $a_3$ and color all uncolored edges except incident edges of $v$ by Claim~\ref{claim2}. So we have $|L'(e_1)|\geq4$, $|L'(e_2)|\geq3$, $|L'(e_3)|\geq2$, $|L'(e_4)|\geq1$ and color the four edges in the order $e_4$, $e_3$, $e_2$, $e_1$.

The proof of Theorem~\ref{th1} is completed.
\end{proof}

\section{Acknowledgement}

This work was supported by the National Natural Science Foundation of China (11471193, 11631014), the Foundation for Distinguished Young Scholars of Shandong Province (JQ201501), the Fundamental Research Funds of Shandong University and Independent Innovation Foundation of Shandong University (IFYT14012), the Shandong Provincial Nature Science Foundation of China (No. ZR2017MA018).

\section{Appendix}
\lstset{
  language=matlab,
  basicstyle=\small,
  stepnumber=1,
  numbersep=5pt,
commentstyle=\color{red!50!green!50!blue!50},
showstringspaces=false,
basicstyle=\footnotesize,
xleftmargin=2em,xrightmargin=2em,
% frame=single,
%  TABframe=single,
  tabsize=4,
}
\begin{lstlisting}
%Matlab
%input
syms x1 x2 x3 x4 x5 x6 x7 x8 x9
%Lemma 2.4
Q=(x1-x2)*(x1-x3)*(x1-x4)*(x1-x5)*(x1-x6)*(x1-x7)*(x1-x9)*(x2-x3)*(x2-x4)*(x2-x5)*
  (x2-x6)*(x2-x7)*(x2-x8)*(x3-x4)*(x3-x5)*(x3-x6)*(x3-x7)*(x3-x8)*(x3-x9)*(x4-x5)*
  (x4-x7)*(x4-x8)*(x4-x9)*(x5-x6)*(x5-x8)*(x5-x9)*(x6-x7)*(x7-x8)*(x8-x9);
C1=diff(diff(diff(diff(diff(diff(diff(diff(diff(Q,x1,3),x2,4),x3,5),x4,4),x5,4)
   ,x6,2),x7,3),x8,2),x9,2)/factorial(3)/factorial(4)/factorial(5)/factorial(4)
   /factorial(4)/factorial(2)/factorial(3)/factorial(2)/factorial(2)
%output
C1=-1
\end{lstlisting}

\end{document}